\newtheorem{theorem}{Theorem}[section]
\newtheorem{proposition}[theorem]{Proposition}
\newtheorem{lemma}[theorem]{Lemma}
\newtheorem{corollary}[theorem]{Corollary} 
\theoremstyle{definition}
\newtheorem{definition}[theorem]{Definition}
\newtheorem{remarkx}[theorem]{Remark}
	{\popQED\\}{\endremarkx}
\def\bdem{\begin{proof}}
\def\edem{\end{proof}}
\def\bequ{\begin{equation}}
\def\eequ{\end{equation}}
\newcommand{\C}{\mathbb{C}}
\newcommand\I{\Omega}
\newcommand{\N}{\mathbb{N}}
\newcommand{\G}{\mathcal{G}}
\def\N{\mathbb{N}}
\def\C{\mathbb{C}}
\def\G{\mathcal{G}}
\def\AA{A}
\def\I{\mathbb{I}}
\def\diag{\text{diag}}
\def\Tr{\text{Tr}}
\begin{document}

\begin{frontmatter}

\title{Error analysis on the initial state reconstruction problem \tnoteref{t1}} 
\tnotetext[t1]{The research for this article has mainly been carried out while R. D\'\i az Mart\'\i n was  holding a postdoctoral fellowship from CONICET, and  I. Medri was holding postdoctoral fellowships from Vanderbilt University. }
\author[1]{Roc\'\i o D\'\i az Mart\'\i n \corref{cor1}}
\ead{rocio.diazmartin@unc.edu.ar}
\author[2]{Ivan Medri}
\ead{cocarojasjorgeluis@gmail.com}
\author[3]{Juliana Osorio %
}

\cortext[cor1]{Corresponding author}
\address[1]{IAM -- CONICET, Saavedra 15, CABA, Argentina and Universidad Nacional de C\'ordoba, Ciudad Universitaria, C\'ordoba, Argentina.}

\address[2]{Department of Mathematics, Vanderbilt University, Nashville, TN 37240-0001, USA. }

\address[3]{Dto. de Matem\'atica, FCEyN, Universidad de Buenos Aires, Ciudad Universitaria, CABA, Argentina}

%\date{}

\begin{keyword}
Dynamical Sampling\sep Control Theory\sep Kalman filter\sep Observability\sep Stability
%\MSC[2010]{93B07}
\end{keyword}

\begin{abstract} In this paper we propose a method to estimate the initial state of a linear dynamical system with noisy observation. The method allows the user to have estimations in real time, that is, to have a new estimation for each new observation. Moreover, at each step, the covariance matrix of the error is known and it is proved that the dynamic of the state estimator error is always Lyapunov stable. Also, %necessary and 
sufficient conditions are given to guarantee asymptotic stability for  the error dynamics of an LTI dynamical system, which is itself an LTV system.   
\end{abstract}

\end{frontmatter}

\section{Introduction}

The aim of this paper is to  recover the initial state of a given   dynamical system from noisy observations (and without noise in the dynamic) as well as to analyze the asymptotic stability of the method. 
%For discrete dynamical systems, if the dimension of the space of states is finite, one can find in the literature some algorithms to recover both the initial or the final state. 

%For discrete dynamical systems, if the dimension of the space of states is finite, there are some classic algorithms to recover the final state of a system and its corresponding asymptotic analysis in the case of noisy dynamics, but only recently the case of perfect dynamic has been addressed (see \cite{Zhang}).  Initial state recovery is often more unreliable in the case of noisy dynamics and the case of perfect dynamics can be now addressed following the ideas of the aforementioned article. 

%In particular, the communities of control and dynamical sampling theory have addressed the problem under certain conditions. 

Specifically, consider the discrete-time linear dynamical system 
\begin{equation}\label{dynSys}
    \begin{cases}
        x(k+1)=A_{k+1}x(n), & x(0)=x_0\\
        y(k) = H_k x(k) + v_k, & v_k \sim N(0, R_k) 
    \end{cases}
\end{equation}
where, for each $k\in\N\cup\{0\}$, $x(k)\in\C^d$ are the \textit{states} of the system, $y(k)\in \C^m$ are the \textit{observations}, and  $A_k$ and $H_k$ are  matrices in $\C^{d\times d}$ and $\C^{m\times d}$, respectively, called the \textit{dynamic operator} and the \textit{observation or sampling operator} at time $k$. It is assumed that $m<d$. If the dynamic and sampling operators are constant, we said that the system is \textit{linear time invariant} (LTI). The sequence $\{v_k\}$ is a random process with known distribution, typically assumed normal with mean zero and such that 
$\mathbb{E}(v_j v_k^*) = R_k\delta_{j,k}$,
where the matrices $R_k$ are symmetric positive definite.
In this paper, we deal with the \textit{observability} problem. That is, we will consider the dynamical system \eqref{dynSys} where the unknown initial state $x_0$ is to be recovered from the set $\{y(k)\}_k$.

Some methods for this problem, proposed in Control Theory, are known as \textit{Kalman filter methods} (see, e.g., \cite{Jaz,Simon}). They are applied for systems that include both,  
noise in the observations and in the dynamic.
Roughly speaking, they can be deduced by making a succession of estimates that come from solving a recursive weighted least square problem.
A priori, the development of Kalman filters does not focus on estimating the initial state by knowing  samples in later times, but is based on reconstructing the final state through the knowledge of previous observations. Both problems, the one of recovering the initial state and the one of recovering the final state, can be related by introducing fictitious variables that remain static over time, thus obtaining that the final state of the augmented system always contains the initial state (cf. \cite[Chapter 9]{Simon}). However, in presence of noise, this leads to the reconstruction problem of the initial state being less robust than that of the final state. A specific error control is then required. 
%Particularly, in the field of Control Theory the approaches that allow to find the final state of a dynamical system under noise in the dynamic and in the observations are usually called \textit{Kalman filter methods} \cite{Jaz}. 
 
 It is worth noting that in the 
 recent paper \cite{Zhang}, the classical stability analysis of Kalman filter methods  is reviewed to include dynamical systems where the dynamics are unperturbed as in \eqref{dynSys}. The study of the situation in absence of
 process noise in the state equation
 is useful for the cases where information about the models is more precise, and this is the framework we will consider in the present paper.

 For LTI dynamical systems, when the observation operator $H$ is presented as
\begin{equation*}\label{rdm-sist din}
        Hx=\{\langle x , g\rangle\}_{ g \in \G},
\end{equation*}
for some set of vectors $\G\subset \C^d$ (of cardinality less or equal than $d$), the observability problem is known as the \textit{dynamical sampling} problem.  The collection of the observations can be viewed as \textit{time-space samples} of the form
\begin{equation}\label{framee}
   \{y(k)\}_{0\leq k\leq L-1}=\{\left(\langle x(k) , g\rangle\right)_{ g \in \G,0\leq k\leq L-1}= \{\langle  A^kx_0 ,g\rangle\}_{ g \in \G,\, 0\leq k\leq L-1}.
\end{equation}
We said that we solved the dynamical sampling problem when \eqref{framee} becomes a \textit{frame} for $\C^d$ (for some final time $L$). 
The dynamical sampling problem has been widely studied (see, e.g., \cite{acmt, random-noise} and the references therein). For an explicit formulation of the connections between Dynamical Sampling and Control Theory we mention the recent article \cite{dmmm}. We remark that in the framework of dynamical sampling, non perturbation on the dynamic operator are considered.

The papers  
\cite{rdm-adk, random-noise} developed algorithms for solving the dynamical sampling problem where the space measurements are inexact. Furthermore, they treat the case where dynamic operator $A$ is unknown. The techniques behind these papers lie on least squares methods and  \textit{denoising} using  \textit{Cadzow} algorithms \cite{Cadzow}. However,  the authors make several assumptions on the dynamic operator. In particular, they analyze in great detail the case where it is a  circulant matrix. With this extra assumption Fourier analysis techniques are suitable. 
%Apart from that, we mention that inside the community of Dynamical Sampling  one can find, for example, the article \cite{random-noise}.

Our objectives are to propose a new way to estimate the initial state of a discrete-time dynamical system (without noise in the dynamics) by using \textit{a priori} information (noisy observations) %that could be adaptable to the case where the state space is infinite dimensional 
and to develop the stability analysis on the dynamic of the state estimator error.

We organize the paper as follows: In Section \ref{sec_prelim} we introduce some notation to be used for the rest of the paper. Then, we state what we are going to called a \textit{uniformly observable}  dynamical system by adapting the notion of \textit{uniformly completely observable} dynamical system given in  \cite{Jaz,Zhang}: Since our goal is to reconstruct the initial state (not the final state),
we translate the requirements on the so-called \textit{information matrix} (cf. \cite{Jaz}) to the \textit{observability matrix} (given in \eqref{obs_matrix}).
Apart from that, we include some preliminaries results from stability theory of dynamical systems. The main contributions of this paper start in Section \ref{sec_our_method} where we proposed our method to recover the initial state of a system. This approach is based on the Kalman methods used to recover final states. 
We particularly seek to provide a new scheme to those given from the side of dynamical sampling theory (see Remark \ref{remark1})  by introducing techniques from control theory. Finally, in Section \ref{asymptotics} we deal with the problem of stability of our method. For general linear time-variant dynamical systems we obtain Lyapunov stability for the dynamic of the state estimator  error  (Theorem \ref{teo_stability_a_secas}), whereas for LTI systems we proved asymptotic stability under an extra hypothesis on the eigenvalues of the dynamic matrix (Theorem \ref{asymp stability theorem}). Numerical examples are presented at the end of this section.

\section{Preliminaries}\label{sec_prelim}

\subsection{Notation and assumptions}\label{assumptions}
%We introduce some notation to be used for the rest of the paper.

For the dynamical system \eqref{dynSys}
the initial estate $x_0$ is the unknown but deterministic vector we desire to estimate and 
%We assume that the matrices $A(k)$ are non singular for all $k\geq 0$ and 
we use the following convention:
$$
A(k,k) = \mathbb{I}, ~~ A(k+1,k) = A_{k+1}
$$
where $\mathbb{I}$ denotes the identity matrix. For all $k\in \N\cup\{0\}$ the matrices $A_k$ are assumed to be invertible, so the dynamics is time-reversible, then if $j<k$
\begin{align*}
    A(k,j) =& \prod_{\ell = 0}^{k-j-1}A(k-\ell) = A_{k}A_{k-1}\cdots A_{j+2}A_{j+1} \quad \text{ and } \quad 
    A(j,k) = A^{-1}(k,j)
\end{align*}
The matrix $A(n,j)$ is called the \textit{state transition matrix} of the system and with this notation we can go from the state $x(j)$ at time step $j$ to the state $x(k)$ at time step $k$ via $x(k)=A(k,j)x(j)$. Then we have 
\begin{equation*}
\begin{cases}
    x(k) = A(k,0)x_0 \\
    y(k) = H_kA(k,0)x_0 + v_k
\end{cases}
\end{equation*}
Also, we denote
$$
\widetilde{H}_k = H_kA(k,0)
$$
%Note that we have used the index notation to define the time varying matrices $\widetilde{H}_k$. We will use the index for other than the transition matrices to ease the notation.
which is  the operator that observes the exact evolution of the initial state $x_0$ at time $k$.

For the case of a linear time invariant system (LTI) we have $A_{k} = A$ and $H_k = H$, therefore the transition matrices and  $\widetilde{H}_k$ take a simpler form:
$$
A(k,0) = A^k, \qquad \widetilde{H}_k = HA^k 
$$

Finally, regarding the noise $m\times m$ real covariances  $R_k$ we assume they are all positive definite matrices with a strictly positive lower bound which we will assume once and for all to be $\sigma^2\mathbb I$, i.e,
$$
R_k \geq \sigma^2 \mathbb I \qquad \forall k\in\N\cup\{0\}.
$$

\subsection{Observability}

One may say that \textit{to observe} the system \eqref{dynSys} means to recover the initial data $x_0$ by knowing only the output function $k \mapsto y(k)$. With more mathematical rigor, first notice that observations of the dynamical system \eqref{dynSys} are given by
\begin{equation}\label{obs2}
    \begin{bmatrix}
    y(0)\\
    y(1)\\
    \vdots\\
    y(L-1)
    \end{bmatrix}=\mathcal{A}_L \, x_0
    +\begin{bmatrix}
    v_0\\
    v_1\\
    \vdots\\
    v_{L-1}
    \end{bmatrix}
\end{equation}
where $\mathcal{A}_L$ is the $mL\times d$ block matrix
\begin{equation*}
    \mathcal{A}_L:=\begin{pmatrix}
    H_0\\
    H_1A(1,0)\\
   \vdots\\
    H_{L-1}A({L-1},0)
    \end{pmatrix}= \begin{pmatrix}
    \widetilde{H}_0\\
    \widetilde{H}_1\\
    \vdots\\
    \widetilde{H}_{L-1}
    \end{pmatrix}
\end{equation*}
Let $\mathcal{R}_L$ be the $mL\times mL$ diagonal block matrix
\begin{equation*}
    \mathcal{R}_L:=\begin{bmatrix}
    R_0& \\
    &R_1&\\
    & & \ddots\\
    & &  &R_{L-1}
    \end{bmatrix}
\end{equation*}
The observability problem consists of solving the a weighted least squares problem (with weight given by $\mathcal{R}_L^{-1}$) which reduces to see if $\mathcal{A}_L^*\mathcal{R}^{-1}_L\mathcal{A}_L$ is  positive defined,
which is equivalent to
\begin{equation}\label{obs_matrix}
    \mathcal{O}(L,0):=\sum_{j=0}^{L-1} \AA(j,0)^* H_j^* R_j^{-1}H_j\AA(j,0) > 0
\end{equation} 
The matrix $\mathcal{O}$ is called the \textit{observability matrix}.

\begin{definition}\label{obs def}
The dynamical system \eqref{dynSys} is said to be
%\begin{enumerate}[(i)]
%    \item \textit{Observable} if there exists $L\in\N$ such that
%\begin{equation*}
%    \mathcal{O}(L,0)=\sum_{j=0}^{L-1} \widetilde{H}_j^TR_j^{-1}\widetilde{H}_j >0.
%\end{equation*}
\textit{uniformly observable}  if there exists $L\in\N$ and $\rho>0$ such that
\begin{equation*}
    \mathcal{O}(k+L,k)=\sum_{j=k}^{k+L-1} A(j,k)^*H_j^*R_j^{-1}H_jA(j,k) \geq \rho\mathbb{I} \quad \text{for all } k\geq 0.
\end{equation*}
% OR EQUIVALENTLY 
%\begin{equation*}
%     \mathcal{O}(k,k-L)=\sum_{j=k-L}^{k-1} A(j,k-L)^TH_j^TR_j^{-1}H_jA(j,k-L) \geq \rho\mathbb{I} \quad \text{for all } k\geq L
% \end{equation*}
%     \item \textit{Super observable}  if there exist $L\in\N$ and $\rho>0$ such that
% \begin{equation*}
%     \sum_{j=k-L}^{k-1} \widetilde{H}_j^TR_j^{-1}\widetilde{H}_j \geq \rho\mathbb{I} \quad \text{for all } k\geq L
% \end{equation*}
%dadas L observaciones consecutivas, empezando desde cualquier tiempo, puedo recuperar el estado inical x_0 
%\end{enumerate}
\end{definition}

%While the concept of uniform complete observability itself is simple and straightforward to define, it is more often that not very difficult to show that a particular system is uniformly completely observable (UCO). This is a true limitation as the concept of uniform completely observability is instrumental in numerous studies. It permits to establish the convergence of the Kalman filter for LTV systems (see Besan¸con (2007) and references therein). It is also recurrent in adaptive control [...] página 2 del paper [Relaxed conditions for uniform complete observability and controllability of LTV systems with bounded realizations] 

For the case of an LTI dynamical system uniform observability is equivalent to \textit{observability}, that is, there exists $L>0$ such that the observability matrix is
\begin{equation*}
    \mathcal{O}(L,0)=\sum_{j=0}^{L-1} (A^{j})^*H^*R_j^{-1}HA^j .
\end{equation*}
is strictly positive definite.

\begin{remarkx}\label{remark1}
Without noise, solving the dynamical sampling problem is equivalent to have $\mathcal{A}_L$ full-rank. The \textit{frame operator}  $\mathcal{A}_L^*\mathcal{A}_L$ is the {\it observability matrix} $\mathcal{O}(L,0)$ associated to the LTI dynamical system.

We point out that in \cite{ random-noise}, assuming  that for some fixed number $L\geq 1$, the matrix $\mathcal{A}_N$ has full rank  for every $N\geq L$, and considering noisy measurements as in \eqref{obs2} with random noise 
having $\mathbb{E}(v_j)=0$ and the same covariance matrix $\sigma^2\mathbb{I}$ for all $j\in\N\cup\{0\}$ (for some $\sigma>0$),  the authors  propose the $N$-th estimation $\widetilde{x}_N$ of the initial state as
\begin{equation*}
    \widetilde{x}_N=\mathop{argmin}_{x\in\C^d} \sum_{j=0}^{N-1}\|HA^jx-y(j)\|%=\left(\mathcal{A}_N^*\mathcal{A}_N\right)^{-1}\mathcal{A}_N^* \, {\bf y}_N
\end{equation*} 
We will introduce a new method which allows an update with each new observation data.
\end{remarkx}

\subsection{Lyapunov Stability}

The algorithm proposed in this work to estimate the initial state of a given observable system will produce a new dynamical system for the estimation errors. Since our goal is to prove the stability for this time varying dynamical system, we take the opportunity to recall the different stages of stability and characterizations for asymptotic stability in the case of linear time varying systems. For a clarifying exposition on the subject see \cite{bof, haddad}.

%\textcolor{red}{Decidir si en los sistemas dinamicos las matrices del estado $k$ al $k+1$ son $A_k$ y $\Psi_k$ o $A_{k+1}$ y $\Psi_{k+1}$.}

\begin{definition}
Given the discrete dynamical system 
\begin{equation}\label{dyn_sist_aux}
x(k+1) = \Psi_{k+1}x(k), \qquad x(k_0) = x_0, \qquad k\geq k_0    
\end{equation}
with $\Psi_k$ a linear operator for all $k$, we say that the  zero solution (the only equilibrium point) of \eqref{dyn_sist_aux} is:
\begin{enumerate}[(i)]
    \item \textit{Lyapunov stable} if for every $\epsilon>0$ and $k_0\in \N$ there exists $\delta = \delta(\epsilon, k_0)>0$ such that $\|x_0\| < \delta$ implies that $\|x(k)\|<\epsilon$ for all $k\geq k_0$.
    \item \textit{Uniformly stable} if for every $\epsilon>0$ there exists $\delta = \delta(\epsilon)>0$, independent on $k_0$ such that $\|x_0\| < \delta$ implies that $\|x(k)\|<\epsilon$ for all $k\geq k_0$.
    \item \textit{Asymptotically stable} if it is Lyapunov stable and for every $k_0\in \N$ there exists $\delta = \delta(k_0)>0$ such that $\|x_0\|<\delta$ implies $\lim_{k\to\infty} x(k) = 0$.
    \item \textit{Uniformly asymptotically stable} if it is uniformly stable and there exists $\delta>0$, independent of $k_0$, such that $\lim_{k\to \infty}\|x(k)\| =0$ uniformly in $k_0$ for all $\|x_0\|<\delta$.
    \item {\textit{Globally uniformly asymptotically stable} if for every  $x_0$,  $\lim_{k\to\infty} \|x(k)\| = 0$.}
    %\textcolor{blue}{Globally uniformly asymptotically stable: if it is uniformly stable, $\delta(\epsilon)$ can be chosen to satisfy $lim_{\epsilon\to\infty}\delta(\epsilon)= \infty$, and, for each pair of positive numbers $\eta$ and $c$, there is $T = T(\eta, c) > 0$ such that $\|x(k)\| < \eta$ for all $k >k_0  + T(\eta, c)$ and $\|x(k_0)\| < c$.}
\end{enumerate}
\end{definition}

\begin{proposition}(cf. \cite{bof})\label{el-lemma!!} Let $\Psi(k,k_0)$ denote the state transition matrix for \eqref{dyn_sist_aux} from time $k_0$ to time $k$, then the following are equivalent:
\begin{enumerate}
    \item The equilibrium point of system \eqref{dyn_sist_aux} is uniformly asymptotically stable
    \item   $\lim_{k\to\infty}\|\Psi(k,k_0)\|= 0$ uniformly in $k_0$.
    \item $\|\Phi(k,k_0)\| \leq \alpha e^{-\lambda(k-k_0)} \ \ \forall k\geq k_0$ for some positive constants $\alpha, \lambda$.
    \item {The equilibrium point of the system \eqref{dyn_sist_aux} is globally uniformly asymptotically stable.}
\end{enumerate}
This in turn says that in the case of linear systems asymptotic stability and \textit{exponential stability} are equivalent.

\end{proposition}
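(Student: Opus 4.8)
The plan is to prove all the equivalences by establishing the cyclic chain $(1)\Rightarrow(2)\Rightarrow(3)\Rightarrow(4)\Rightarrow(1)$, relying throughout on two structural facts: the solution of \eqref{dyn_sist_aux} is $x(k)=\Psi(k,k_0)x_0$, and the state transition matrix obeys the cocycle identity $\Psi(k,j)\Psi(j,k_0)=\Psi(k,k_0)$ with $\Psi(k_0,k_0)=\mathbb{I}$. Linearity is exactly what forces all five notions to collapse: since $\|\Psi(k,k_0)\|=\sup_{\|x_0\|=1}\|\Psi(k,k_0)x_0\|$, every statement about the operator norm of the transition matrix translates into a statement about the family of trajectories, and conversely. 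I would note $\Phi$ in item (3) as a typographical variant of $\Psi$ and use $\Psi$ consistently.

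For $(1)\Rightarrow(2)$, uniform asymptotic stability supplies a $\delta>0$ for which $\|x(k)\|\to0$ uniformly in $k_0$ whenever $\|x_0\|\le\delta$; rescaling trajectories by $1/\delta$ and taking the supremum over the (compact, since $d<\infty$) unit sphere yields $\|\Psi(k,k_0)\|\to0$ uniformly in $k_0$. The crux of the whole argument is $(2)\Rightarrow(3)$. From the uniformity in (2), choosing $\veps=\tfrac12$ produces an integer $T$ with $\|\Psi(k_0+T,k_0)\|\le\tfrac12$ for every $k_0$. Writing $k-k_0=nT+r$ with $0\le r<T$ and factoring $\Psi(k,k_0)$ through the cocycle identity into $n$ consecutive blocks of length $T$ followed by a remainder block, submultiplicativity gives $\|\Psi(k,k_0)\|\le M\,(\tfrac12)^{n}$, where $M:=\sup_{k_0,\,0\le r<T}\|\Psi(k_0+r,k_0)\|$. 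Setting $\lambda:=\tfrac1T\ln 2$ and absorbing the constant into $\alpha$ converts geometric decay in $n$ into the exponential bound $\|\Psi(k,k_0)\|\le\alpha e^{-\lambda(k-k_0)}$. The delicate point here, and the main obstacle, is that $T$ and especially the per-period bound $M$ must be chosen independently of $k_0$: the uniformity of (2) directly delivers $T$, but the finiteness of $M$ over the finitely many horizons $r\in\{0,\dots,T-1\}$ uses the standing uniform boundedness of the one-step maps $\Psi_k$ (equivalently $\sup_{k_0}\|\Psi(k_0+r,k_0)\|<\infty$ for each fixed $r$), which I would record explicitly as the hypothesis that makes the equivalence genuinely uniform.

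The remaining implications are short. For $(3)\Rightarrow(4)$, the bound gives $\|x(k)\|\le\alpha e^{-\lambda(k-k_0)}\|x_0\|\to0$ for every $x_0$, which is global convergence. For $(4)\Rightarrow(1)$ I would again exploit linearity and finite dimensionality: global convergence of every trajectory means $\Psi(k,k_0)\to0$ when applied to each vector of a basis, hence $\|\Psi(k,k_0)\|\to0$, and the uniformity built into the global \emph{uniform} asymptotic stability in (4) (together with the boundedness of the $\Psi_k$) upgrades this to the uniform stability and uniform attraction demanded by (1). The concluding remark that asymptotic stability coincides with exponential stability for linear systems is then simply the equivalence of (1) and (3) already proved, since (3) is precisely the definition of exponential stability.
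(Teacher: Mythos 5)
The paper itself offers no proof of this proposition; it is imported wholesale from \cite{bof}, so your attempt can only be measured against the standard argument for this classical result. Your cyclic chain and, in particular, the cocycle/window decomposition behind $(2)\Rightarrow(3)$ (choose $T$ with $\|\Psi(k_0+T,k_0)\|\le\tfrac12$ for all $k_0$, factor into blocks of length $T$, convert geometric decay into the rate $\lambda=\tfrac1T\ln 2$) is exactly the classical technique, and your flag about the constant $M$ is a real issue, not pedantry: condition (2) controls nothing over horizons shorter than $T$, and one can build scalar systems (isolated large one-step gains, each flanked by strong damping factors) for which (2) holds while (3) fails, so some hypothesis such as $\sup_k\|\Psi_k\|<\infty$ is genuinely indispensable for that leg. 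However, the standard proof avoids this extra hypothesis by proving $(1)\Rightarrow(3)$ directly rather than through (2): the uniform-stability half of the definition of (1) already gives, by homogeneity, a bound $\sup_{k\ge k_0}\|\Psi(k,k_0)\|\le \varepsilon_0/\delta(\varepsilon_0)$ valid for all $k_0$, and this is precisely the constant $M$ you need, while the uniform-attraction half supplies $T$. Your ordering $(1)\Rightarrow(2)\Rightarrow(3)$ throws away the uniform-stability information and then must reimpose it as a new assumption; this matters here, because where the paper invokes the proposition (Theorem \ref{asymp stability theorem}) the one-step maps are $\Psi_k=P_kP_{k-1}^{-1}$ and their uniform boundedness is never established. (The directions the paper actually uses there, $(3)\Rightarrow(1)$ and $(1)\Rightarrow(2)$, are exactly the ones that need no side condition.)

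The genuine gap is at $(4)\Rightarrow(1)$. The paper's Definition 2.4(v) of ``globally uniformly asymptotically stable'' literally requires only that every solution satisfy $\lim_{k\to\infty}\|x(k)\|=0$, with no uniformity in $k_0$. Under that definition the implication is false even when the $\Psi_k$ are uniformly bounded: for the scalar system $\Psi_k=\tfrac{k}{k+1}$ one has $\Psi(k,k_0)=\tfrac{k_0+1}{k+1}$, so every trajectory decays to zero and the system is even uniformly stable, yet $\sup_{k_0}\|\Psi(k_0+T,k_0)\|\to 1$ as $k_0\to\infty$ for every fixed $T$, so (1), (2) and (3) all fail. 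Your argument papers over this by appealing to ``the uniformity built into'' (4); with the paper's definition there is no such uniformity to invoke, and pointwise-in-$k_0$ convergence of $\|\Psi(k,k_0)\|$ cannot be upgraded to uniform convergence --- that is exactly what the counterexample blocks, and no boundedness assumption repairs it. If instead (4) is given its standard meaning (as in \cite{bof}: uniform stability plus attraction uniform in $k_0$ and on bounded sets of initial states), then $(4)\Rightarrow(1)$ is an immediate restriction and your basis argument is superfluous; either way the step as written does not stand, and the correct repair is to fix the definition, which you should state explicitly just as you did for the boundedness hypothesis. A milder instance of the same slip occurs in $(1)\Rightarrow(2)$: pointwise convergence on the compact unit sphere does not by itself give convergence of the supremum; what saves you is finite dimensionality through a basis (finitely many directions, each converging uniformly in $k_0$, summed), so the argument should be phrased that way rather than via compactness.
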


\section{Kalman-based method for initial state recovering}\label{sec_our_method}

In this section we develop a recovery algorithm for a general linear dynamical system in the presence of noisy observations with known statistics as in \eqref{dynSys}.
% \begin{equation}\label{dynSys2}
%     \begin{cases}
%         x(n+1)=A_{n+1}x(n), \qquad x(0)=x_0\\
%         y(n) = H_nx(n) + v_n \qquad v_n \sim N(0, R_n) \ \ \forall n\in\N 
%     \end{cases}
% \end{equation}

The idea of this method to estimate the initial state $x_0$ is the following: We start from an initial guess denoted by $\widehat{x}_0$ and for each time step we improve the estimation by correcting the previous one with the so called \textit{innovation term}, in this way for the next estimation we take into account the last available observation of the system. More explicitly, the estimator is of the form
\begin{align}\label{initial_state_estimator}
    \widehat{x}_k =& \widehat{x}_{k-1} + K_k\left(y(k-1) - \widetilde{H}_{k-1}\widehat{x}_{k-1}\right) \qquad k =1,2,\dots\\
    \widehat{x}_0 =& \text{initial guess} \notag
\end{align}
where $\widehat{x}_k$ denotes the $k$-th estimation of the initial state $x_0$. 
The matrices $K_k$ are to be defined according to some optimality criteria. 

Note that the $k$-th estimation of the initial state ($\widehat{x}_k$)  takes into account all the first $k$ noisy observations $\{y(n)\}_{n=0}^{k-1}$.

Set $e_k=\widehat{x}_k-x_0$ the error given by the $k$-th estimation of the initial state $x_0$. Its covariance matrix is given by
\begin{equation}\label{covariance}
    P_k:=\text{Cov}(e_k,e_k)=\mathbb{E}\left[(\widehat{x}_k-x_0)(\widehat{x}_k-x_0)^*\right] - \mathbb{E}\left[(\widehat{x}_k-x_0)\right]\mathbb{E}\left[(\widehat{x}_k-x_0)\right]^*
\end{equation}
For each step $k$, the matrix $K_k$ is chosen such that it minimizes the following energy functional 
\begin{equation*}
J(\widehat{x}_k)=\frac{1}{2}\mathrm{Tr}(P_k).    
\end{equation*}
The trace of the covariance is called the variance of the error estimation. In this sense we are defining our estimator as having minimum variance. The next theorem gives a prescription of the algorithm in terms of the initial guess and its initial covariance error.

\begin{theorem}\label{algorithm}
Given the dynamical system \eqref{dynSys}, the solution of the minimization problem given above is
\begin{equation}\label{Kalman_init_state}
    K_k=P_{k-1}\widetilde{H}_{k-1}^*\left( \widetilde{H}_{k-1} P_{k-1} \widetilde{H}_{k-1}^* + R_{k-1}\right)^{-1}.
\end{equation}
and the actualization of the covariance error matrix is given by
\begin{equation}\label{P_k}
    P_k=P_{k-1} - K_k\widetilde{H}_{k-1}P_{k-1}
\end{equation}
with $P_0$ the initial guess for the covariance error.
\end{theorem}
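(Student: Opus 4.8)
The plan is to reduce everything to a one-step recursion for the error $e_k$ and then minimize a quadratic in the unknown gain $K_k$. First I would substitute the observation model $y(k-1)=\widetilde{H}_{k-1}x_0+v_{k-1}$ into the estimator \eqref{initial_state_estimator} and subtract $x_0$. A direct computation gives the affine recursion
\[
e_k=(\mathbb{I}-K_k\widetilde{H}_{k-1})e_{k-1}+K_kv_{k-1},
\]
which exhibits $e_k$ as a linear combination of the previous error and the current measurement noise.

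Then I would propagate the covariance. Since $e_{k-1}$ is built only from the initial guess and the noises $v_0,\dots,v_{k-2}$, and these are uncorrelated with $v_{k-1}$ by the standing assumption $\mathbb{E}(v_jv_k^*)=R_k\delta_{j,k}$, the cross terms vanish and the covariance \eqref{covariance} obeys the Joseph-type identity
\[
P_k=(\mathbb{I}-K_k\widetilde{H}_{k-1})P_{k-1}(\mathbb{I}-K_k\widetilde{H}_{k-1})^*+K_kR_{k-1}K_k^*.
\]
Here I would note that an unbiased initial guess keeps $\mathbb{E}(e_k)=0$ along the recursion, so the mean-subtraction built into \eqref{covariance} is harmless. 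Expanding, and writing $S:=\widetilde{H}_{k-1}P_{k-1}\widetilde{H}_{k-1}^*+R_{k-1}$ together with $M:=P_{k-1}\widetilde{H}_{k-1}^*$, this becomes $P_k=P_{k-1}-K_kM^*-MK_k^*+K_kSK_k^*$, using that $P_{k-1}$ is Hermitian so $M^*=\widetilde{H}_{k-1}P_{k-1}$.

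The key step is the minimization of $J=\tfrac12\mathrm{Tr}(P_k)$ over $K_k$. Rather than differentiate the trace with respect to a complex matrix, I would complete the square: since $S>0$ (it is invertible because $R_{k-1}\geq\sigma^2\mathbb{I}>0$ while $\widetilde{H}_{k-1}P_{k-1}\widetilde{H}_{k-1}^*\geq 0$), one checks the algebraic identity
\[
P_k=P_{k-1}-MS^{-1}M^*+(K_k-MS^{-1})\,S\,(K_k-MS^{-1})^*.
\]
Taking traces, only the last summand depends on $K_k$, and since $S>0$ it is nonnegative and vanishes exactly when $K_k=MS^{-1}=P_{k-1}\widetilde{H}_{k-1}^*S^{-1}$, which is \eqref{Kalman_init_state}; this simultaneously identifies the global minimizer. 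Substituting the optimal gain leaves $P_k=P_{k-1}-MS^{-1}M^*$, and recognizing $MS^{-1}M^*=K_k\widetilde{H}_{k-1}P_{k-1}$ yields \eqref{P_k}. The main obstacle is essentially bookkeeping: carrying the conjugate transposes correctly through the complex matrices and verifying the completion-of-the-square identity. I prefer the completion-of-the-square route precisely because it sidesteps the matrix-calculus derivative of a trace and delivers the global-minimum claim for free, whereas a gradient computation would require a separate second-order (convexity) argument to rule out saddle points.
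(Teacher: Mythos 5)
Your proposal is correct, and it follows the paper's skeleton for the first two steps: the same error recursion $e_k=(\mathbb{I}-K_k\widetilde{H}_{k-1})e_{k-1}+K_kv_{k-1}$ and the same Joseph-form covariance propagation $P_k=(\mathbb{I}-K_k\widetilde{H}_{k-1})P_{k-1}(\mathbb{I}-K_k\widetilde{H}_{k-1})^*+K_kR_{k-1}K_k^*$, justified by the same observation that $v_{k-1}$ is uncorrelated with $e_{k-1}$. Where you genuinely diverge is the minimization itself: the paper differentiates $J(K_k)=\tfrac12\mathrm{Tr}(P_k)$ with respect to the matrix $K_k$ and solves the stationarity condition $\partial J/\partial K_k=0$, whereas you complete the square, writing $P_k=P_{k-1}-MS^{-1}M^*+(K_k-MS^{-1})S(K_k-MS^{-1})^*$ with $M=P_{k-1}\widetilde{H}_{k-1}^*$ and $S=\widetilde{H}_{k-1}P_{k-1}\widetilde{H}_{k-1}^*+R_{k-1}$. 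Your identity checks out (both sides expand to $P_{k-1}+K_kSK_k^*-K_kM^*-MK_k^*$), and it buys something the paper's argument leaves implicit: since $S>0$, the trace of the residual term is $\lVert(K_k-MS^{-1})S^{1/2}\rVert_F^2$, which is nonnegative and vanishes only at $K_k=MS^{-1}$, so global minimality and uniqueness come for free, while the gradient route strictly speaking only identifies a critical point and relies on the (unstated) convexity of the quadratic functional. The update \eqref{P_k} then drops out of your substitution exactly as in the paper's equation \eqref{obs_s}. One small quibble: your remark that an unbiased initial guess makes the mean-subtraction in \eqref{covariance} harmless is unnecessary --- covariance is invariant under shifts of the mean, so the Joseph form holds by bilinearity of covariance with no unbiasedness assumption, which is how the paper handles it (it carries the $\mathbb{E}[e_{k-1}]\mathbb{E}[e_{k-1}]^*$ terms and absorbs them into the definition of $P_{k-1}$).
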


\begin{proof}
From equations \eqref{dynSys} and  \eqref{initial_state_estimator}  we have a recursive formula for the error
\begin{align}\label{error}
    e_k &= \widehat{x}_k - x_0= (\I-K_k\widetilde{H}_{k-1})\widehat{x}_{k-1} + K_k{y}(k-1) - x_0\notag\\
    &= (\I-K_k\widetilde{H}_{k-1})\widehat{x}_{k-1} + K_k(\widetilde{H}_{k-1} x_0 + {v}_{k-1}) - x_0\notag\\
    &= (\I-K_k\widetilde{H}_{k-1})e_{k-1} + K_k{v}_{k-1} 
\end{align}
Therefore the covariance matrix of the error at the $k$-th estmation is
    \begin{align}\label{P_aux1}
    P_k=& \mathbb{E}\left[((\I-K_k\widetilde{H}_{k-1})e_{k-1} + K_k{v}_{k-1})((\I-K_k\widetilde{H}_{k-1})e_{k-1} + K_k{v}_{k-1})^*\right]- \notag\\
    & \mathbb{E}\left[(\I-K_k\widetilde{H}_{k-1})e_{k-1} + K_k{v}_{k-1}\right]\mathbb{E}\left[(\I-K_k\widetilde{H}_{k-1})e_{k-1} + K_k{v}_{k-1}\right]^* \notag\\
    =& (\I-K_k\widetilde{H}_{k-1})\mathbb{E}(e_{k-1}e_{k-1}^*)(\I-K_k\widetilde{H}_{k-1})^* + K_k\mathbb{E}(v_{k-1}v_{k-1}^*)K_k^*+\notag\\
    &(\I-K_k\widetilde{H}_{k-1})\mathbb{E}(e_{k-1}{v}_{k-1}^*)K_k^* + K_k\mathbb{E}({v}_{k-1}e_{k-1}^*)(\I-K_k\widetilde{H}_{k-1})-\notag\\
    &
     (\I-K_k\widetilde{H}_{k-1})\mathbb{E}(e_{k-1})\mathbb{E}(e_{k-1}^*)(\I-K_k\widetilde{H}_{k-1})^* \notag\\
     =& (\I-K_k\widetilde{H}_{k-1})P_{k-1}(\I-K_k\widetilde{H}_{k-1})^*+K_k{R}_{k-1}K_k^* 
    \end{align}
where we have used that $\mathbb{E}({v}_{k-1}e_{k-1}^*)=0$ since  $v_{k-1}$ and $e_{k-1}$ rise from different times, in fact $v_{k-1}$ is the noise of the $(k-1)$-th observation $y(k-1)$ and $e_{k-1}$ is the error of the $(k-1)$-th estimation of the initial state which takes into account the first $(k-1)$ observations starting from $0$, that is $\{y(0),y(1),\dots, y(k-2)\}$.

Then, the linear functional that we have to minimize  is given by
\begin{align*}
    J(K_k)&=\frac{1}{2}\mathrm{Tr}(P_k)\\
    &=\frac{1}{2}\mathrm{Tr}\left((\I-K_k\widetilde{H}_{k-1})P_{k-1}(\I-K_k\widetilde{H}_{k-1})^*+K_k{R}_{k-1}K_k^*\right)\\
    &=\frac{1}{2}\mathrm{Tr}\left(P_{k-1}\right)-\mathrm{Tr}\left(K_k\widetilde{H}_{k-1}P_{k-1}\right) +\\
    &\, \, \, \, \, \, \, \,  \frac{1}{2}\mathrm{Tr}\left(K_k(\widetilde{H}_{k-1}P_{k-1}\widetilde{H}_{k-1}^* + R_{k-1} )K_k^*\right)
\end{align*}
Imposing that 
\begin{equation*}
    \frac{\partial J}{\partial K_k} = - \left(\widetilde{H}_{k-1}P_{k-1}\right)^* + K_k\left(\widetilde{H}_{k-1}P_{k-1}\widetilde{H}_{k-1}^* + R_{k-1}\right)=0
\end{equation*}
we obtain $K_k$ as in \eqref{Kalman_init_state}.

Finally, replacing \eqref{Kalman_init_state} in \eqref{P_aux1} we have 
\begin{align}\label{obs_s}
    P_k %&= (\I-K_k\widetilde{H}_{k-1})P_{k-1}(\I-K_k\widetilde{H}_{k-1})^*+K_k{R}_{k-1}K_k^* \notag \\
    &= P_{k-1} + K_k\left(\widetilde{H}_{k-1}P_{k-1}\widetilde{H}_{k-1}^* + R_{k-1}\right)K_k^* -K_k\widetilde{H}_{k-1}P_{k-1}-P_{k-1}\widetilde{H}_{k-1}^*K_k^* \notag \\
    &= P_{k-1} +P_{k-1}\widetilde{H}_{k-1}^*K_k^* -K_k\widetilde{H}_{k-1}P_{k-1}-P_{k-1}\widetilde{H}_{k-1}^*K_k^*\notag \\
    &%=P_{k-1} - K_k\widetilde{H}_{k-1}P_{k-1} 
    = (\mathbb{I} - K_k\widetilde{H}_{k-1})P_{k-1}
\end{align}
\end{proof}

This theorem gives raise to the algorithm described as follows.%in Algorithm \ref{alg-Kalman}. 

\begin{algorithm}[H] 
\caption{Kalman-based algorithm to recover $x_0$}
\label{alg-Kalman}
\begin{algorithmic}
\Require{$A\in \mathbb R^{d\times d}$, $H\in\mathbb{R}^{m\times d}$, $R_k$  Dynamic and observation operators, and the covariances for the noise. \\   $\quad \quad \, \, \,  T_f$ final time} 
\Ensure{$\widehat{x}_0$ the initial state estimation after $T_f$ steps. }

\Statex

{\textbf{Initialization:}}
$\widehat{x}_0$, $P_0$ initial guesses for the state and covariance error.\\

\Statex 
$\widetilde{H}_0 = H$
\Statex
    \For{$k = 1$ to $T_f$}                    
        \State {$K_k$ $\gets$ {$P_{k-1}\widetilde{H}_{k-1}^*(\widetilde{H}_{k-1}P_{k-1}\widetilde{H}_{k-1}^* + R_{k-1})^{-1}$}}
        \State {$\widehat{x}_k$ $\gets$ {$\widehat{x}_{k-1} + K_k(y_{k-1} - \widetilde{H}_{k-1}\widehat{x}_{k-1})$}}
        \State {$P_k \gets P_{k-1} - K_k\widetilde{H}_{k-1}P_{k-1}$}
        \State {$\widetilde{H}_k \gets \widetilde{H}_{k-1}A$} 
    \EndFor
    \State \Return {$\widehat{x}_{T_f}$}
\end{algorithmic}
\end{algorithm}

\section{On the stability of the state estimation error dynamics}\label{asymptotics}

As can be deduced from \eqref{error} and  \eqref{obs_s}, the dynamics for $\mathbb{E}(e_k)$ and for the matrices $P_k$ are the same. Indeed,
\begin{equation*}\label{expected value}
    \mathbb{E}(e_k) =\Psi_k\mathbb{E}(e_{k-1}) + K_k\mathbb{E}(v_{k-1})= \Psi_k\mathbb{E}(e_{k-1}) \qquad \text{ and } \qquad P_k=\Psi_k P_{k-1}
\end{equation*}
where the transitions are given by time varying matrix 
\begin{equation}\label{Psi}
 \Psi_k = \mathbb{I} - K_k\widetilde{H}_{k-1}   
\end{equation}
for $K_k$ are computed according to  \eqref{Kalman_init_state} in Theorem \ref{algorithm}.
In this section we look for the asymptotic stability of a linear time-varying system of the form
\begin{equation}\label{eq_estabilidad_1}
    z(k)=\Psi_kz(k-1), \qquad z(0) =z_0.
\end{equation}
with $\Psi_k$ as in \eqref{Psi}, which corresponds to the dynamics of the expectation of the state dynamic error
and of the covariance matrices of the proposed estimation method for the initial state of a system as in \eqref{dynSys}.
% where $\Psi_k = \mathbb{I} - K_{k}\widetilde{H}_{k-1}$ and matrices $K_k$ are prescribed in Theorem \ref{algorithm}.
% From \eqref{error} the dynamic for the error of the estimator is given by
% \begin{equation}\label{error_dyn}
%     e_k =\Psi_k e_{k-1} + K_k v_{k-1}, 
% \end{equation}

%If we want to analyse its stability we only need to see the homogeneous part. Explicitly we are going to explore the stability of the equation \eqref{eq_estabilidad_1}. 
% What is more, if we look at the expectation in \eqref{error_dyn} we have
% \begin{equation}\label{expected value}
%     \mathbb{E}(e_k) =\Psi_k\mathbb{E}(e_{k-1}) + K_k\mathbb{E}(v_{k-1})= \Psi_k\mathbb{E}(e_{k-1})
% \end{equation}
% that is,  \eqref{eq_estabilidad_1} corresponds to the dynamic of the expectation of the state dynamic error.
%\textcolor{blue}{(*)}

The next proposition summarizes the properties of the transition matrices.
%\textcolor{blue}{quizás este párrafo debería ir en (*)}

\begin{proposition}\label{Psi props}
The matrices $\Psi_k$ satisfy the following:
\begin{itemize}
    \item[i)] $\Psi_k = P_kP_{k-1}^{-1}$ for all $k\geq 1$.
    \item[ii)] $\Psi(k,j) = P_kP_j^{-1}$ for all $k\geq j$. Therefore $\Psi(j,k) = P_jP_k^{-1}$
\end{itemize}
\end{proposition}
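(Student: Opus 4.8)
The plan is to read off both identities directly from the covariance recursion $P_k = (\mathbb{I} - K_k\widetilde{H}_{k-1})P_{k-1}$ established in Theorem \ref{algorithm} (equation \eqref{obs_s}), once invertibility of the $P_k$ is in hand. Since $\Psi_k = \mathbb{I} - K_k\widetilde{H}_{k-1}$ by \eqref{Psi}, that recursion reads exactly $P_k = \Psi_k P_{k-1}$, so multiplying on the right by $P_{k-1}^{-1}$ yields part i). Thus the genuine content of i) is not the algebra but the existence of $P_{k-1}^{-1}$, and I would organize the argument around proving that each $P_k$ is invertible (indeed positive definite) given a positive definite initial guess $P_0$.

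For the invertibility step I would pass to the \emph{information form}. Applying the matrix inversion lemma to the gain \eqref{Kalman_init_state} and the update \eqref{P_k}, one obtains the additive identity
\[
P_k^{-1} = P_{k-1}^{-1} + \widetilde{H}_{k-1}^* R_{k-1}^{-1}\widetilde{H}_{k-1}.
\]
Because each $R_{k-1}$ is positive definite, the added term $\widetilde{H}_{k-1}^* R_{k-1}^{-1}\widetilde{H}_{k-1}$ is positive semidefinite, so $P_k^{-1} \geq P_{k-1}^{-1} > 0$; a one-line induction starting from $P_0 > 0$ then shows $P_k$ is positive definite, hence invertible, for every $k$. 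This simultaneously completes part i) and shows each $\Psi_k = P_kP_{k-1}^{-1}$ is invertible, which is what makes the error system \eqref{eq_estabilidad_1} time-reversible and legitimizes writing $\Psi(j,k) = \Psi(k,j)^{-1}$ in part ii).

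Part ii) then follows by telescoping. By definition the state transition matrix of \eqref{eq_estabilidad_1} is $\Psi(k,j) = \Psi_k\Psi_{k-1}\cdots\Psi_{j+1}$ for $k\geq j$, with $\Psi(j,j) = \mathbb{I}$. Substituting part i) into each factor gives
\[
\Psi(k,j) = (P_kP_{k-1}^{-1})(P_{k-1}P_{k-2}^{-1})\cdots(P_{j+1}P_j^{-1}) = P_kP_j^{-1},
\]
the adjacent factors cancelling in pairs; taking inverses yields $\Psi(j,k) = P_jP_k^{-1}$.

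The main obstacle is precisely the invertibility claim: part i) is otherwise a trivial rearrangement of the covariance update, but it is vacuous without $P_{k-1}^{-1}$, and the Joseph-form expression \eqref{P_aux1} for $P_k$ does not by itself force positive definiteness (the term $K_kR_{k-1}K_k^*$ need not be full rank). The information-form identity above is the clean remedy, and its verification through the matrix inversion lemma is the one substantive computation in the proof; everything else is cancellation.
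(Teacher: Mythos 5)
Your proof is correct, and for the two displayed identities it is the same argument as the paper's: part i) reads the covariance update \eqref{obs_s} as $P_k=\Psi_kP_{k-1}$ and multiplies on the right by $P_{k-1}^{-1}$, and part ii) telescopes $\Psi(k,j)=\Psi_k\Psi_{k-1}\cdots\Psi_{j+1}$ and then takes inverses. What you do differently is to make the invertibility of the $P_k$'s explicit. The paper's proof writes $P_{k-1}^{-1}$ without comment (indeed, Proposition \ref{Psi props} does not even carry the hypothesis $P_0>0$), and the positive definiteness of every $P_k$ --- via the information-form recursion $P_k^{-1}=P_{k-1}^{-1}+\widetilde{H}_{k-1}^*R_{k-1}^{-1}\widetilde{H}_{k-1}$ obtained from the matrix inversion lemma --- is only established afterwards, as Proposition \ref{Pk_bound}. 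You folded exactly that computation (it is the same Woodbury identity the paper uses there) into the present proof, which makes your version self-contained and supplies the hypothesis the statement silently needs; the only cost is duplication with Proposition \ref{Pk_bound}. One small inaccuracy in your closing remark: the Joseph form \eqref{P_aux1} \emph{does} force positive definiteness once $P_{k-1}>0$ and $R_{k-1}>0$, for any gain $K_k$ --- if $x^*P_kx=0$ then both $(\mathbb{I}-K_k\widetilde{H}_{k-1})^*x=0$ and $K_k^*x=0$, whence $x=\widetilde{H}_{k-1}^*K_k^*x=0$ --- so that route would also have closed the invertibility gap; this does not affect the validity of the proof you actually gave.
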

\begin{proof}
i) follows by \eqref{obs_s}. To prove $ii)$ we simply write the transition matrix and use that $\Psi(i,i-1) = \Psi_i$:
\begin{align*}
    \Psi(k,j) =& \Psi(k,k-1)\Psi(k-1,k-2)\Psi(k-2,k-3)~\cdots~ \Psi(j+2,j+1)\Psi(j+1,j) \\
    = & P_kP_{k-1}^{-1}P_{k-1}P_{k-2}^{-1}~\cdots~ P_{j+2}P_{j+1}^{-1}P_{j+1}P_j^{-1} = P_kP_j^{-1}.
\end{align*}
Finally, by item $i)$ $\Psi_k$ is invertible as it is the product of two non singular matrices, therefore $\Psi_k^{-1} = P_{k-1}P_k^{-1}$. Then,
$$
\Psi(j,k)=\Psi(k,j)^{-1} = P_jP_k^{-1}
$$

\end{proof}

\begin{proposition}\label{Pk_bound}
If the covariance matrix $P_0$ of the initial guess is strictly positive, 
then $P_k$ is upper bounded for all $k\geq 0$. Moreover, 
\begin{equation}\label{morover}
  P_0\geq P_1\geq P_2\geq \dots\geq P_k\geq P_{k+1}\geq \dots
\end{equation}
and their inverses satisfy the relation
\begin{equation}\label{eq-P_k}
  P_{k}^{-1}=P_0^{-1}+\mathcal{O}(k,0)  
\end{equation}
\end{proposition}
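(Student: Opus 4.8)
The plan is to prove the three assertions in reverse logical order, since the inverse relation \eqref{eq-P_k} is the strongest of the three and the other two fall out of it almost immediately. The heart of the matter is the so-called \emph{information form} of the covariance update, namely the identity
\begin{equation*}
    P_k^{-1} = P_{k-1}^{-1} + \widetilde{H}_{k-1}^* R_{k-1}^{-1} \widetilde{H}_{k-1}, \qquad k\geq 1,
\end{equation*}
which I would establish first, by induction, simultaneously with the positive definiteness of every $P_k$.

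For the inductive step I would assume $P_{k-1}>0$. Since $R_{k-1}\geq \sigma^2\mathbb{I}>0$ and $\widetilde{H}_{k-1}P_{k-1}\widetilde{H}_{k-1}^*\geq 0$, the matrix $\widetilde{H}_{k-1}P_{k-1}\widetilde{H}_{k-1}^*+R_{k-1}$ is positive definite, so $K_k$ in \eqref{Kalman_init_state} is well defined and the update \eqref{P_k} makes sense. The key computation is to apply the Woodbury identity
\begin{equation*}
    (M + U C V)^{-1} = M^{-1} - M^{-1}U\left(C^{-1} + V M^{-1}U\right)^{-1}V M^{-1}
\end{equation*}
with $M=P_{k-1}^{-1}$, $U=\widetilde{H}_{k-1}^*$, $C=R_{k-1}^{-1}$ and $V=\widetilde{H}_{k-1}$. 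The inner matrix $C^{-1}+VM^{-1}U=R_{k-1}+\widetilde{H}_{k-1}P_{k-1}\widetilde{H}_{k-1}^*$ is exactly the quantity inverted in \eqref{Kalman_init_state}, and the right-hand side of Woodbury collapses precisely to $P_{k-1}-K_k\widetilde{H}_{k-1}P_{k-1}$, i.e. to $P_k$ as given in \eqref{P_k}. Hence $P_k=\left(P_{k-1}^{-1}+\widetilde{H}_{k-1}^*R_{k-1}^{-1}\widetilde{H}_{k-1}\right)^{-1}$, which is positive definite, being the inverse of the sum of a positive definite and a positive semidefinite matrix, and this yields the information form.

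With the information form in hand, telescoping from $k$ down to $0$ gives
\begin{equation*}
    P_k^{-1}=P_0^{-1}+\sum_{j=0}^{k-1}\widetilde{H}_j^* R_j^{-1}\widetilde{H}_j
    =P_0^{-1}+\sum_{j=0}^{k-1}A(j,0)^*H_j^*R_j^{-1}H_j A(j,0)=P_0^{-1}+\mathcal{O}(k,0),
\end{equation*}
which is exactly \eqref{eq-P_k}. The monotonicity \eqref{morover} then follows because each summand $\widetilde{H}_{k-1}^*R_{k-1}^{-1}\widetilde{H}_{k-1}$ is positive semidefinite, so $P_k^{-1}\geq P_{k-1}^{-1}>0$; applying the fact that $X\mapsto X^{-1}$ is antitone on positive definite matrices (if $0<B\leq A$ then $A^{-1}\leq B^{-1}$) reverses the inequality to $P_k\leq P_{k-1}$. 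Finally, the upper bound asserted first is then immediate, since $P_k\leq P_0$ for all $k\geq 0$.

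I expect the only genuinely delicate point to be the Woodbury collapse: one must verify carefully that the inner matrix appearing there coincides with the one inverted in the definition of $K_k$, and that the resulting expression equals the update \eqref{P_k} rather than merely resembling it. Everything else—the positivity bookkeeping in the induction, the telescoping, and the antitonicity of matrix inversion—is routine once the information form is secured.
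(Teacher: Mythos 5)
Your proposal is correct and follows essentially the same route as the paper's proof: both hinge on the matrix inversion (Woodbury) lemma to obtain the information-form recursion $P_k^{-1}=P_{k-1}^{-1}+\widetilde{H}_{k-1}^*R_{k-1}^{-1}\widetilde{H}_{k-1}$, telescope it to get \eqref{eq-P_k}, and deduce monotonicity and the upper bound from there. The only cosmetic differences are that you apply Woodbury in the reverse direction (inverting the information form rather than the covariance update), make the positive-definiteness induction and the antitonicity of matrix inversion explicit, and bound $P_k$ by $P_0$ rather than by $\|P_0\|\mathbb{I}$ --- none of which changes the substance of the argument.
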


\begin{proof}
Replacing \eqref{Kalman_init_state} in \eqref{P_k} we have
\begin{align}\label{P_aux2}
    P_k%&=P_{k-1} -K_{k}\widetilde{H}_{k-1}P_{k-1} \notag\\
       &=P_{k-1} -P_{k-1}\widetilde{H}_{k-1}^*\left(\widetilde{H}_{k-1}P_{k-1}\widetilde{H}_{k-1}^* +R_{k-1}
       \right)^{-1}\widetilde{H}_{k-1}P_{k-1}.
\end{align}
Take the matrix inverses of both sides of \eqref{P_aux2} while applying the matrix equality
\begin{equation*}
    (G+V^*WV)^{-1}= G^{-1} - G^{-1}V^*\left(W^{-1} +VG^{-1}V^* \right)^{-1}VG^{-1}
\end{equation*}  with
\begin{equation*}
    \begin{cases}
    G=P_{k-1}\\
    V=\widetilde{H}_{k-1}P_{k-1}\\
    W=-\left(\widetilde{H}_{k-1}P_{k-1}\widetilde{H}_{k-1}^* +R_{k-1}
       \right)^{-1}
    \end{cases}
\end{equation*}
we obtain a recursion for $P_{k}^{-1}$
\begin{equation*}
    P_{k}^{-1}=P_{k-1}^{-1}+\widetilde{H}_{k-1}^*R^{-1}_{k-1}\widetilde{H}_{k-1}\geq P_{k-1}^{-1} 
\end{equation*}
From that recursion we obtain  \eqref{morover}.
    Moreover, it can be seen that
    \begin{align*}
        &P_0^{-1}>0  \qquad \text{ (by hypothesis)}  \\
             &P_{1}^{-1}= P_0^{-1} + H^*R_0^{-1}H%\\
             %&P_{k}^{-1}= P_0^{-1} + \sum_{j=0}^{k-1}\widetilde{H}_j^*R_j^{-1}\widetilde{H}_j 
    \end{align*}
    and recursively we obtain
    $$P_{k}^{-1}= P_0^{-1} + \sum_{j=0}^{k-1}\widetilde{H}_j^*R_j^{-1}\widetilde{H}_j,$$
    that is, 
    \begin{equation*}%\label{eq-P_k}
        P_{k}^{-1}=P_0^{-1}+\mathcal{O}(k,0)\geq P_0^{-1}  \qquad \text{ for all } k\geq 0.
    \end{equation*}
Finally, since $P_0$ is symmetric positive definite, we have that $P_0 \leq \rho_0 \mathbb{I}$ with $\rho_0 = \|P_0\|$, therefore 
$$
P_k^{-1} \geq \frac{1}{\rho_0}\mathbb{I} \qquad \text{ and } \qquad P_k \leq {\rho_0}\mathbb{I}.
$$
\end{proof}

\subsection{Lyapunov stability}

Lyapunov's direct method \cite{haddad} is one of the most popular ways to show all kind of stages of stability for a given dynamical system. In our case we were able to prove \textit{plain stability} for the estimation error dynamics considering as a candidate for Lyapunov function $V(k,z(k)) = z(k)^*P_k^{-1}z(k)$ with $P_k$ the error covariance matrices described in the previous proposition. 

\begin{theorem}\label{teo_stability_a_secas}
Let $\Psi_k$ as in \eqref{Psi}. If the initial covariance matrix $P_0$  is strictly positive then the zero solution of the system 
\begin{equation*}
\begin{cases}
z(k) = \Psi_kz(k-1)\\
z(0) = z_0        
\end{cases}
\end{equation*}
is stable.
\end{theorem}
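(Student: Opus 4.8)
The plan is to run Lyapunov's direct method with the candidate $V(k,z) = z^* P_k^{-1} z$ flagged in the paragraph preceding the statement. To conclude plain stability it suffices to verify two things: that $V$ is bounded below by a fixed positive multiple of $\|z\|^2$ (so that smallness of $V$ controls smallness of $z$), and that $V$ is non-increasing along the trajectories of \eqref{eq_estabilidad_1}. Both facts will be read off from the two preceding propositions.

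First I would collect the ingredients. From Proposition \ref{Pk_bound}, the hypothesis $P_0>0$ gives $P_k \leq \rho_0 \mathbb{I}$ with $\rho_0 = \|P_0\|$, hence $P_k^{-1} \geq \rho_0^{-1}\mathbb{I}$ and the uniform lower bound $V(k,z) \geq \rho_0^{-1}\|z\|^2$ for every $k$. The same proposition supplies the Loewner monotonicity $P_k \leq P_{k-1}$, and Proposition \ref{Psi props}(i) supplies the factorization $\Psi_k = P_k P_{k-1}^{-1}$. The central computation is then the decay of $V$. Substituting $z(k)=\Psi_k z(k-1) = P_k P_{k-1}^{-1} z(k-1)$ and using that $P_k$, and hence $P_k^{-1}$ and $P_{k-1}^{-1}$, are self-adjoint, the quadratic form simplifies to $V(k,z(k)) = z(k-1)^* P_{k-1}^{-1} P_k P_{k-1}^{-1} z(k-1)$, since the middle block collapses via $P_k P_k^{-1} P_k = P_k$. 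Because the Loewner order is preserved under congruence by the self-adjoint matrix $P_{k-1}^{-1}$, the inequality $P_k \leq P_{k-1}$ yields $P_{k-1}^{-1} P_k P_{k-1}^{-1} \leq P_{k-1}^{-1}$, and therefore $V(k,z(k)) \leq z(k-1)^* P_{k-1}^{-1} z(k-1) = V(k-1,z(k-1))$.

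With monotonicity in hand I would assemble the standard estimate. Fixing $k_0$ and starting from $z(k_0)=z_0$, the non-increasing property gives $V(k,z(k)) \leq V(k_0,z_0) \leq \|P_{k_0}^{-1}\|\,\|z_0\|^2$ for all $k \geq k_0$, while the lower bound gives $\|z(k)\|^2 \leq \rho_0\, V(k,z(k))$; combining these yields $\|z(k)\|^2 \leq \rho_0\,\|P_{k_0}^{-1}\|\,\|z_0\|^2$. Given $\epsilon>0$, the choice $\delta = \epsilon\,(\rho_0\,\|P_{k_0}^{-1}\|)^{-1/2}$ then forces $\|z(k)\|<\epsilon$ for all $k\geq k_0$ whenever $\|z_0\|<\delta$, which is precisely Lyapunov stability.

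The point requiring care, and the reason the conclusion is plain rather than uniform stability, is that $\|P_{k_0}^{-1}\|$ grows with $k_0$: by \eqref{eq-P_k} the inverse covariances increase in $k$, so $\delta$ genuinely depends on $k_0$ and cannot be chosen uniformly. The only real obstacle in the argument is the congruence step in the monotonicity calculation, which hinges on marrying the factorization $\Psi_k = P_k P_{k-1}^{-1}$ with the Loewner monotonicity of the $P_k$; everything else is bookkeeping.
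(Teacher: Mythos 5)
Your proof is correct, and it shares the paper's overall framework: the same Lyapunov candidate $V(k,z)=z^*P_k^{-1}z$ and the same lower bound $V(k,z)\geq\rho_0^{-1}\|z\|^2$ drawn from Proposition \ref{Pk_bound}. The difference lies in how the decrease of $V$ along trajectories is established. The paper computes the decrement exactly: it substitutes the dynamics, uses \eqref{obs_s} to rewrite $\left(\mathbb{I}-K_k\widetilde{H}_{k-1}\right)z(k-1)$ as $P_kP_{k-1}^{-1}z(k-1)$, and then invokes the gain formula \eqref{Kalman_init_state} to arrive at
\begin{equation*}
\Delta V(k,z)=-z(k-1)^*\widetilde{H}_{k-1}^*\left(\widetilde{H}_{k-1}P_{k-1}\widetilde{H}_{k-1}^*+R_{k-1}\right)^{-1}\widetilde{H}_{k-1}z(k-1)\leq 0,
\end{equation*}
whereas you bypass the Kalman gain altogether: using $\Psi_k=P_kP_{k-1}^{-1}$ from Proposition \ref{Psi props} you collapse $\Psi_k^*P_k^{-1}\Psi_k$ to $P_{k-1}^{-1}P_kP_{k-1}^{-1}$, and then the Loewner monotonicity $P_k\leq P_{k-1}$ of \eqref{morover}, preserved under congruence by the self-adjoint $P_{k-1}^{-1}$, gives $V(k,z(k))\leq V(k-1,z(k-1))$. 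Both steps are valid; your route is shorter, reuses the two propositions already proved, and your explicit $\epsilon$--$\delta$ assembly (with $\delta$ depending on $\|P_{k_0}^{-1}\|$, hence on $k_0$, exactly as the paper's definition of Lyapunov stability permits) makes the conclusion self-contained where the paper instead verifies the hypotheses of \cite[Theorem 13.11]{haddad}. What the paper's computation buys in exchange is the exact form of the decrement: knowing that $-\Delta V$ is the quadratic form of $\widetilde{H}_{k-1}^*\Sigma_{k-1}^{-1}\widetilde{H}_{k-1}$, which is merely positive semi-definite, is precisely what the remark following the theorem leans on to explain why this Lyapunov function cannot deliver asymptotic stability; your inequality-only argument proves stability equally well but does not expose that obstruction.
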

\begin{proof}
To prove the stability it is enough to give a function $V(k,z(k))$ satisfying the following (see \cite[Theorem 13.11]{haddad}): 
$$\begin{cases}
    V(k, 0) = 0  \\
    V(k,z(k)) \geq \alpha(\|z(k)\|) \quad    \\
    \Delta V(k,z) := V(k,z(k)) - V(k-1, z(k-1)) \leq 0 
\end{cases}$$
\text{for some strictly increasing function } $\alpha:[0,\infty)\to [0,\infty)$.

\noindent Consider
\begin{equation}\label{lyapunov_func}
    V(k,z(k))=z(k)^*P_{k}^{-1}z(k)
\end{equation}
as our Lyapunov function. By Proposition \ref{Pk_bound} we have
\begin{equation*}
    V(k,z(k))\geq \frac{1}{\rho_0}\|z(k)\|^2 \qquad \forall k\geq0.
\end{equation*}
Now 
\begin{align*}
    V(k,z(k)) &= z(k)^*P_{k}^{-1}z(k)\\
    &=z(k-1)^*\left(\mathbb{I}-K_k\widetilde{H}_{k-1}\right)^* P_k^{-1} \left(\mathbb{I}-K_k\widetilde{H}_{k-1}\right)z(k-1) \qquad \text{ (by \eqref{eq_estabilidad_1})} \\
    &=z(k-1)^*\left(\mathbb{I}-K_k\widetilde{H}_{k-1}\right)^* P_k^{-1}P_kP_{k-1}^{-1}z(k-1) \qquad \text{ (by \eqref{obs_s})}\\
    &=V(k-1,z(k-1))-z(k-1)^*\widetilde{H}_{k-1}^*K_k^* P_{k-1}^{-1}z(k-1)\\
    &=V(k-1, z(k-1))-\\
    & \quad z(k-1)^*\widetilde{H}_{k-1}^*\left(\widetilde{H}_{k-1}P_{k-1}\widetilde{H}_{k-1}^*+R_{k-1}\right)^{-1}\widetilde{H}_{k-1}z(k-1) \, \, \,  \, \, \,  \text{ (by \eqref{Kalman_init_state})}.
\end{align*}
Hence
\begin{equation*}
    \Delta V(k,z) =  -z(k-1)^*\widetilde{H}_{k-1}^*\Sigma_{k-1}^{-1}\widetilde{H}_{k-1}z(k-1) \leq 0
\end{equation*}
where
\begin{equation*}\label{Sigma}
    \Sigma_{k-1}=\widetilde{H}_{k-1}P_{k-1}\widetilde{H}_{k-1}^*+R_{k-1}.
\end{equation*}
\end{proof}

\begin{remarkx}
Note that with the Lyapunov function given in \eqref{lyapunov_func} one cannot prove the asymptotic stability for the zero solution, one of the reasons being that the matrices $\widetilde{H}_{k}^*\Sigma_{k}^{-1}\widetilde{H}_k$ are only positive semi-definite.
\end{remarkx}

\subsection{Asymptotic stability for the error dynamics of an LTI dynamical systems}

From now on, we will consider the system \eqref{dynSys} to be LTI.
Before we state our main result about asymptotic stability, we introduce some notation and enunciate two important results concerning eigenvalues and singular values of a given matrix.
Given any $B\in \C^{d\times d}$, let 
\begin{equation*}
    \lambda_{\max}(B)= \max\{|\lambda|: \lambda \text{ eigenvalue of } B\}, \lambda_{\min}(B)= \min\{|\lambda|: \lambda \text{ eigenvalue of } B\}
\end{equation*}
Also, $s_i(B)$ denotes the $i$-th singular value and $s_{\min}(B), s_{\max}(B)$ denote the smallest and largest singular values respectively.
For $B\in \C^{d\times d}$ hermitian  denote  $\{\lambda_i(B)\}$ the set of eigenvalues of $B$ ordered by 
$$
\lambda_1(B) \geq \lambda_2(B) \geq\cdots \geq \lambda_d(B).
$$
If $C\in\C^{d\times d}$ is also hermitian then by 
Weyl's Theorem \cite{Horn} we have
\begin{equation}\label{weyl}
    \lambda_i(B)+\lambda_{d}(C)\leq\lambda_i(B+C)\leq\lambda_i(B)+\lambda_{1}(C).
\end{equation}
Finally, a Gelfand type result for the asymptotic behaviour of the singular values of the powers of a matrix. 
\begin{theorem} (cf. \cite{yamamoto})\label{gelfand singular values}
For any given matrix $A\in \mathbb{C}^{d\times d}$ with eigenvalues $\lambda_{k}(A)$, $k=1,\dots, d$  ordered by their absolute values in a non increasing way $|\lambda_1|\geq |\lambda_2|\geq\cdots\geq|\lambda_d|$. For any $n\in\mathbb{N}$ denote $s_i(A^n)$ the singular values of $A^n$  also ordered non increasingly $s_1(A^n)\geq s_2(A^n)\geq\cdots\geq s_d(A^n)$ then
\begin{equation*}
    \lim_{n\to\infty}(s_i(A^n))^{1/n} = |\lambda_i|.
\end{equation*}

\end{theorem}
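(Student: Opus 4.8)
The plan is to reduce the statement to the classical Gelfand spectral-radius formula $\lim_{n\to\infty}\|B^{n}\|^{1/n}=\rho(B)$ (valid for every $B\in\C^{d\times d}$) by passing to exterior powers, i.e.\ compound matrices. The point is that the product of the top $k$ singular values is an operator norm of a single auxiliary matrix, and that auxiliary matrix behaves well under taking powers.

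Concretely, I would recall the following standard facts about the $k$-th compound matrix $\wedge^{k}B$, the linear map induced by $B$ on $\wedge^{k}\C^{d}$ (for $1\le k\le d$). Its singular values are exactly the products $s_{i_{1}}(B)\cdots s_{i_{k}}(B)$ over all index sets $i_{1}<\cdots<i_{k}$, so its largest singular value is
\[
s_{1}(\wedge^{k}B)=\|\wedge^{k}B\|=\prod_{j=1}^{k}s_{j}(B).
\]
Likewise its eigenvalues are the products $\lambda_{i_{1}}\cdots\lambda_{i_{k}}$, so (with the $|\lambda_{j}|$ ordered decreasingly) its spectral radius is $\rho(\wedge^{k}B)=\prod_{j=1}^{k}|\lambda_{j}(B)|$. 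Finally, exterior powers are multiplicative, $\wedge^{k}(BC)=(\wedge^{k}B)(\wedge^{k}C)$, whence $\wedge^{k}(A^{n})=(\wedge^{k}A)^{n}$.

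First I would combine these identities: for each fixed $k$,
\[
\prod_{j=1}^{k}s_{j}(A^{n})=\|\wedge^{k}(A^{n})\|=\|(\wedge^{k}A)^{n}\|,
\]
so applying Gelfand's formula to the matrix $\wedge^{k}A$ yields
\[
\lim_{n\to\infty}\Big(\prod_{j=1}^{k}s_{j}(A^{n})\Big)^{1/n}=\rho(\wedge^{k}A)=\prod_{j=1}^{k}|\lambda_{j}|=:\pi_{k},\qquad 1\le k\le d.
\]
Next I would pass from these cumulative products to the individual factors. Write $a_{i}^{(n)}:=(s_{i}(A^{n}))^{1/n}$ and $\pi_{0}:=1$. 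If $A$ is invertible every $\pi_{k}$ is strictly positive, so $\prod_{j=1}^{k}a_{j}^{(n)}\to\pi_{k}>0$ and, taking quotients of consecutive cumulative products,
\[
a_{i}^{(n)}=\frac{\prod_{j=1}^{i}a_{j}^{(n)}}{\prod_{j=1}^{i-1}a_{j}^{(n)}}\ \longrightarrow\ \frac{\pi_{i}}{\pi_{i-1}}=|\lambda_{i}|,
\]
the quotient being legitimate since the denominator converges to a nonzero limit. If $A$ is singular, let $r$ be the number of nonzero moduli $|\lambda_{j}|$; the quotient argument still gives $a_{i}^{(n)}\to|\lambda_{i}|$ for $i\le r$, while $\pi_{r}>0=\pi_{r+1}$ forces $a_{r+1}^{(n)}\to0$, and then $0\le a_{i}^{(n)}\le a_{r+1}^{(n)}\to0=|\lambda_{i}|$ for all $i>r$ by the monotonicity $s_{i}(A^{n})\le s_{r+1}(A^{n})$.

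The substantive content, and the place where care is needed, is the compound-matrix dictionary: that the product of the $k$ largest singular values equals $\|\wedge^{k}A\|$ and that the spectral radius of $\wedge^{k}A$ is the product of the $k$ largest eigenvalue moduli of $A$. Once this dictionary and Gelfand's formula are in hand the result is essentially forced, the $k=1$ instance being Gelfand's formula itself ($s_{1}(A^{n})=\|A^{n}\|$ and $\pi_{1}=|\lambda_{1}|=\rho(A)$). The degenerate case of a non-invertible $A$ is only a minor technical wrinkle, dispatched by the ordering of the singular values as above.
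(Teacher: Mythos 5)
The paper itself gives no proof of this statement: it is quoted as a known result, with a pointer to Yamamoto's 1967 article, and is then used as a black box in the proof of Lemma \ref{lambda_min O}. So there is no internal proof to compare yours with; what matters is whether your argument stands on its own, and it does. The reduction of $\prod_{j=1}^{k}s_j(A^n)$ to $\bigl\|(\wedge^k A)^n\bigr\|$ via the three compound-matrix facts you list (the singular values of $\wedge^k B$ are the $k$-fold products of singular values of $B$, its eigenvalues are the $k$-fold products of eigenvalues of $B$, and $\wedge^k(BC)=(\wedge^k B)(\wedge^k C)$) is correct, and these facts are standard multilinear algebra (the singular-value statement follows from $\wedge^k(B^*B)=(\wedge^k B)^*(\wedge^k B)$, the eigenvalue statement from Schur triangularization). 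Gelfand's formula applied to $\wedge^k A$ then gives $\lim_{n\to\infty}\bigl(\prod_{j=1}^k s_j(A^n)\bigr)^{1/n}=\prod_{j=1}^k|\lambda_j|$ for every $k$, including the degenerate case where this product vanishes. Your passage from cumulative products to individual factors is also handled correctly: for invertible $A$ the telescoping quotient is legitimate because each denominator tends to a strictly positive limit, and in the singular case you use the quotient argument up to index $r+1$ (where it yields limit $0$) and the ordering $s_i(A^n)\le s_{r+1}(A^n)$ for $i>r+1$ to cover the indices with $|\lambda_i|=0$. This exterior-power route is the standard modern proof of Yamamoto's theorem, and inserting it would make the paper self-contained on this point; the only improvement I would ask for is to either prove or give a precise reference for the compound-matrix dictionary, which you yourself correctly identify as the place where all the substance lies.
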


The following lemma describes the asymptotic behaviour of the smallest eigenvalue of the observability matrix. More explicitly, we show that the non-decreasing sequence $\{\lambda_{\min}\mathcal{O}(k,0)\}_{k\in\N}$ either goes to infinity or it is bounded, depending on the locus of the eigenvalues of the dynamic operator $A$.

\begin{lemma}\label{lambda_min O}
Let the system \eqref{dynSys} be observable and with dynamics given by $A$. Then the following holds:  
\begin{enumerate}
    \item  If  $\lambda_{\min}(A)> 1$, then
\begin{equation*}\label{O-no-acot}
    \lim_{k\to \infty}\lambda_{\min}\left(\mathcal{O}(k,0)\right)=\infty.
\end{equation*}
    \item If $\lambda_{\min}(A)<1$, then the sequence $\{\lambda_{\min}(\mathcal{O}(k,0)\}_{k\in\N}$ converges to a positive constant.
\end{enumerate}
\end{lemma}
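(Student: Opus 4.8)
The plan is to first record the structural fact that $\{\lambda_{\min}(\mathcal{O}(k,0))\}_k$ is non-decreasing, so the whole question reduces to deciding whether this sequence is bounded. Indeed $\mathcal{O}(k+1,0)=\mathcal{O}(k,0)+(A^k)^*H^*R_k^{-1}HA^k$, and the added term is positive semidefinite; applying Weyl's inequality \eqref{weyl} with $B=\mathcal{O}(k,0)$ and $C$ the new term gives $\lambda_{\min}(\mathcal{O}(k,0))\leq\lambda_{\min}(\mathcal{O}(k+1,0))$ (for these positive semidefinite matrices $\lambda_{\min}$, the minimal modulus, is the algebraically smallest eigenvalue). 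By observability $\lambda_{\min}(\mathcal{O}(L,0))>0$, so the limit exists in $(0,\infty]$, and the two parts amount to showing it is $+\infty$ or finite, respectively.

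For part (1), I would exploit the block structure of $\mathcal{O}$ together with uniform observability (which, as recorded earlier, coincides with observability in the LTI setting). Writing $k=NL$ and grouping the sum into blocks of length $L$,
\begin{equation*}
\mathcal{O}(NL,0)=\sum_{i=0}^{N-1}(A^{iL})^*\Big(\sum_{\ell=0}^{L-1}(A^{\ell})^*H^*R_{iL+\ell}^{-1}HA^{\ell}\Big)A^{iL}=\sum_{i=0}^{N-1}(A^{iL})^*\,\mathcal{O}(iL+L,iL)\,A^{iL}.
\end{equation*}
Definition \ref{obs def} bounds each inner block below by $\rho\mathbb{I}$, so $\mathcal{O}(NL,0)\geq\rho\sum_{i=0}^{N-1}(A^{iL})^*A^{iL}$. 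Passing to the smallest eigenvalue and using $\lambda_{\min}\big(\sum_i(A^{iL})^*A^{iL}\big)=\min_{\|x\|=1}\sum_i\|A^{iL}x\|^2\geq\sum_i s_{\min}(A^{iL})^2$, the problem collapses to the growth of $s_{\min}(A^{iL})$. Here Theorem \ref{gelfand singular values} is decisive: $(s_{\min}(A^{iL}))^{1/(iL)}\to\lambda_{\min}(A)>1$, so $s_{\min}(A^{iL})^2\geq c^{2iL}$ for some $c>1$ and all large $i$; the series diverges, giving $\lambda_{\min}(\mathcal{O}(NL,0))\to\infty$, and monotonicity upgrades this to the full sequence.

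For part (2), I would instead bound $\mathcal{O}(k,0)$ from above. From $R_j\geq\sigma^2\mathbb{I}$ we get $R_j^{-1}\leq\sigma^{-2}\mathbb{I}$, hence $H^*R_j^{-1}H\leq\sigma^{-2}\|H\|^2\mathbb{I}$ and $\mathcal{O}(k,0)\leq\sigma^{-2}\|H\|^2\sum_{j=0}^{k-1}(A^j)^*A^j$. By monotonicity of eigenvalues under the Loewner order it then suffices to bound $\lambda_{\min}\big(\sum_{j=0}^{k-1}(A^j)^*A^j\big)=\min_{\|x\|=1}\sum_{j=0}^{k-1}\|A^jx\|^2$ uniformly in $k$. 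Choosing $x$ to be a unit eigenvector $v$ for an eigenvalue $\lambda$ of $A$ with $|\lambda|=\lambda_{\min}(A)<1$ gives $A^jv=\lambda^jv$, whence $\sum_{j=0}^{k-1}\|A^jv\|^2=\sum_{j=0}^{k-1}|\lambda|^{2j}\leq(1-|\lambda|^2)^{-1}$, a bound independent of $k$. Thus $\lambda_{\min}(\mathcal{O}(k,0))$ is bounded above; being non-decreasing and bounded below by $\lambda_{\min}(\mathcal{O}(L,0))>0$, it converges to a positive constant.

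The main obstacle is part (1): because $H$ may have a nontrivial kernel, individual summands $\|HA^jx\|^2$ can vanish, so one cannot track the growth along a single direction, and the monotone minimum over all unit vectors need not follow any one trajectory $A^jx$. The block decomposition combined with observability is what \emph{refills} every direction (turning $H^*H$ into a definite Gramian), while Theorem \ref{gelfand singular values} supplies the uniform exponential lower bound on $s_{\min}(A^{iL})$ that the bare eigenvalue data of $A$ would not yield directly. A secondary point to handle carefully is that this lower bound on the inner blocks is precisely uniform observability, which in the LTI case is equivalent to observability as noted earlier in the paper.
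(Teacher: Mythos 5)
Your proof is correct and follows essentially the same route as the paper's: the block decomposition $\mathcal{O}(NL,0)\geq\rho\sum_{i}(A^{iL})^*A^{iL}$ coming from uniform observability, Theorem \ref{gelfand singular values} to get exponential growth of $s_{\min}(A^{iL})$, and, for part (2), testing the quadratic form against a minimal-modulus eigenvector together with $R_j^{-1}\leq\sigma^{-2}\mathbb{I}$. The only cosmetic differences are that you sum all the block lower bounds where the paper keeps just the last term (both diverge), and you make the monotonicity of $\lambda_{\min}(\mathcal{O}(k,0))$ explicit via Weyl's inequality where the paper uses it implicitly.
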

\begin{proof}
(1) Let $L\in \N$ and $\rho>0$ given by the observability condition as in Definition \ref{obs def}. We will show that the subsequence $\{\lambda_{\min}(\mathcal{O}(nL,0))\}_{n\in\N}$ goes to infinity. 
We have that
\begin{equation*}
\mathcal{O}(nL,0)=\sum_{j=0}^{n-1}(A^*)^{jL} \, \mathcal{O}((j+1)L,jL) \, A^{jL}\geq \rho\sum_{j=0}^{n-1} (A^*)^{jL}A^{jL} \qquad \text{ for all } n\in\N.    
\end{equation*}
%In particular we have $\lambda_{\min}(\mathcal{O}(nL,0))\geq n\rho$. 
Let us denote
\begin{equation*}
   {\mathcal{O}}_{A,L}(n,0):= \sum_{j=0}^{n-1} (A^*)^{jL}A^{jL}.
\end{equation*}
Therefore,
\begin{equation*}
    \lambda_{\min}\left(\mathcal{O}(nL,0)\right)\geq\rho \lambda_{\min}\left(\mathcal{O}_{A,L}(n,0)\right).
\end{equation*}
%Notice that, for each $n\in\N$,is the observability matrix of an LTI system like \eqref{dynSys2} with dynamics $A^L$ and $H=\mathbb{I}$. 
Then,
\begin{align*}
   \lambda_{\min}\mathcal{O}(nL,0) &\geq \rho\sum_{j=0}^{n-1}\lambda_{\min}((A^*)^{jL}A^{jL})\\
   &\geq \rho \lambda_{\min}((A^*)^{(n-1)L}A^{(n-1)L}) = \rho ~s_{\min}^2(A^{(n-1)L})
\end{align*}
where in the first inequality we have used Weyl's inequality \eqref{weyl}. 
By Theorem \ref{gelfand singular values} we can take $\epsilon$ sufficiently small such that
\begin{equation*}
    s_{\min}^2(A^{(n-1)L}) >  (\lambda_{\min}^{L}(A) - \epsilon)^{2(n-1)}
\end{equation*}
with $\lambda_{\min}^{L}(A) -\epsilon >1$ and we get the result.

Furthermore, for $k$ sufficiently large we can write $k = nL + m$ for $m<L$, then 
\begin{equation}\label{exponential bound for O}
    \lambda_{\min}(\mathcal{O}(k,0))\geq \lambda_{\min}(\mathcal{O}(nL,0)) \geq \rho e^{\beta k} 
\end{equation}
with $\beta$ some positive constant depending on $L,\lambda_{\min}(A)$. This exponential order will be useful for our main result.
\medskip

For the case of a normal operator $A$ the result is rather straightforward and it also includes the case $\lambda_{\min}(A)=1$. In fact, given the eigen-decomposition of $A=U\diag(\lambda_1,\lambda_2,\dots,\lambda_d)U^*$ we have
\begin{equation*}
     \mathcal{O}_{A,L}(n,0)=U\begin{pmatrix}
     \sum_{j=0}^{n-1}|\lambda_1|^{2jL} &0 &\dots &0\\
     0&\sum_{j=0}^{n-1}|\lambda_2|^{2jL} & \dots &0\\
     \vdots&\vdots &\ddots&\vdots\\
     0&0&\dots &\sum_{j=0}^{n-1}|\lambda_d|^{2jL}
     \end{pmatrix}U^{*}
 \end{equation*}

\bigskip

(2) Since $\mathcal{O}(k,0)$ is symmetric and positive definite for $k\geq L$ we now that
    $$
        0<\lambda_{\min}(\mathcal{O}(k,0)) = \min_{\|x\|=1}\langle\mathcal{O}(k,0)x,x\rangle
    $$
Let ${\bf v}$ a (normalized) eigenvector of $A$ corresponding to the eigenvalue of minimum absolute value. Then
\begin{equation*}
    \lambda_{\min}(\mathcal{O}(k,0)) \leq \sum_{j=0}^{k-1}\langle (A^*)^{j}H^* R_j^{-1}HA^j{\bf v}, {\bf v}\rangle \leq~\frac{\|H\|^2}{\sigma^2}~\sum_{j=0}^{k-1}\lambda_{\min}(A)^{2j}<\infty
\end{equation*}
with $\sigma^2$ a positive lower bound for the noise covariances given in Section \ref{assumptions}.
%By the hipothesis of observability we know that $H{\bf v}\neq 0$. Since $\lambda_{min}(A)<1$ we get the result.
\end{proof}

\begin{theorem}\label{asymp stability theorem}
Consider the observable dynamical system \eqref{dynSys}. Let $\Psi_k$  as in \eqref{Psi}. If $\lambda_{\min}(A)>1$, then the equilibrium point of
\begin{equation*}
\begin{cases}
z(k) = \Psi_kz(k-1)\\
z(0) = z_0        
\end{cases}
\end{equation*}
is uniformly asymptotically stable. 
On the other hand, if $\lambda_{\min}(A)<1$ then the equilibrium point is not asymptotically stable.
\end{theorem}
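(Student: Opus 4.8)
The plan is to handle the two cases separately, in both exploiting the explicit description $\Psi(k,k_0)=P_kP_{k_0}^{-1}$ from Proposition \ref{Psi props} together with the information identity $P_k^{-1}=P_0^{-1}+\mathcal{O}(k,0)$ from Proposition \ref{Pk_bound}.

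For the first part I would invoke Proposition \ref{el-lemma!!} and reduce uniform asymptotic stability to the exponential estimate $\|\Psi(k,k_0)\|\le\alpha e^{-\lambda(k-k_0)}$. The driving mechanism is \eqref{exponential bound for O}: since $\lambda_{\min}(A)>1$, Lemma \ref{lambda_min O} gives $\mathcal{O}(k,0)\ge\rho e^{\beta k}\mathbb{I}$, hence $P_k^{-1}\ge\rho e^{\beta k}\mathbb{I}$ and $\|P_k\|\le\rho^{-1}e^{-\beta k}$. Combined with $\|\Psi(k,k_0)\|\le\|P_k\|\,\|P_{k_0}^{-1}\|$ this already yields, for each fixed $k_0$, exponential decay of $\|\Psi(k,k_0)\|$ in $k$, i.e. (non-uniform) asymptotic stability; the real content is to upgrade this to uniformity in $k_0$.

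To obtain uniformity I would first re-run Lemma \ref{lambda_min O}(1) from base time $k_0$: uniform observability holds uniformly in the base time, so $\mathcal{O}(k,k_0)\ge\rho e^{\beta(k-k_0)}\mathbb{I}$ with the same $\rho,\beta$, and since $P_k^{-1}=P_{k_0}^{-1}+(A^{k_0})^*\mathcal{O}(k,k_0)A^{k_0}$ one can hope to bound $\Psi(k,k_0)$ by a quantity depending only on $k-k_0$. The cleanest route is to normalize by the dynamics: set $W_k:=(A^{-k})^*P_k^{-1}A^{-k}$; because $\|A^{-i}\|\to0$ geometrically (Theorem \ref{gelfand singular values}, as $\lambda_{\max}(A^{-1})=\lambda_{\min}(A)^{-1}<1$), the series defining $W_k$ converges and $0<C_-\mathbb{I}\le W_k\le C_+\mathbb{I}$ uniformly, so $P_k\asymp A^{-k}(A^{-k})^*$ and the decay of $\|\Psi(k,k_0)\|$ is governed by $\|A^{-(k-k_0)}\|^2$. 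The hard part lies exactly here: the naive split $\|P_k\|\,\|P_{k_0}^{-1}\|$ loses a factor growing like $(\lambda_{\max}(A)/\lambda_{\min}(A))^{k_0}$ whenever the eigenvalues of $A$ are spread out, so submultiplicativity alone does not deliver uniformity. One must instead keep the $A^{\pm k_0}$ factors together and use the two-sided bound on $W_k$. I expect this to go through transparently when $A$ is normal (diagonalize; $\Psi(k,k_0)$ becomes diagonal with entries bounded by $\lambda_{\min}(A)^{-2(k-k_0)}$, giving a uniform per-step contraction factor $\lambda_{\min}(A)^{-2}<1$), and to require genuine care for strongly non-normal $A$, which is the main obstacle I anticipate.

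For the converse, suppose $\lambda_{\min}(A)<1$ and pick a unit eigenvector $v$ with $Av=\lambda v$, $|\lambda|=\lambda_{\min}(A)$. Using $R_j^{-1}\le\sigma^{-2}\mathbb{I}$ one gets $\langle\mathcal{O}(k,0)v,v\rangle=\sum_{j}|\lambda|^{2j}\|R_j^{-1/2}Hv\|^2\le\sigma^{-2}\|H\|^2(1-|\lambda|^2)^{-1}$, so $\langle P_k^{-1}v,v\rangle=\langle P_0^{-1}v,v\rangle+\langle\mathcal{O}(k,0)v,v\rangle\le C$ uniformly in $k$ (this is precisely the bounded regime of Lemma \ref{lambda_min O}(2)). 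I would then take the trajectory with $z_0=P_0v$, so that $z(k)=\Psi(k,0)z_0=P_kv$. Cauchy--Schwarz gives $1=\langle v,v\rangle=\langle P_k^{1/2}v,P_k^{-1/2}v\rangle\le\langle P_kv,v\rangle^{1/2}\langle P_k^{-1}v,v\rangle^{1/2}$, whence $\|z(k)\|\ge\langle P_kv,v\rangle\ge1/C>0$ for all $k$. Thus this solution stays bounded away from $0$, so the equilibrium is not asymptotically stable, which finishes the second part.
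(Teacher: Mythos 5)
Your handling of the converse direction ($\lambda_{\min}(A)<1$) is correct and complete, and it is essentially the paper's own mechanism: the paper deduces $\|P_k\|\not\to 0$ from the left-hand inequality in \eqref{inequality norm P_k} and Lemma \ref{lambda_min O}, then applies Proposition \ref{el-lemma!!} to the matrix trajectory $P_k=\Psi(k,0)P_0$; you instead run Cauchy--Schwarz on the explicit vector trajectory $z(k)=P_kv$ along a minimal eigenvector, which is self-contained and, if anything, cleaner.

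The gap is in the direct implication, and it is exactly where you said it is. What you do prove ($\|P_k\|\le\rho^{-1}e^{-\beta k}$, hence $\|\Psi(k,k_0)\|\le\|P_k\|\,\|P_{k_0}^{-1}\|\to 0$ for each fixed $k_0$) is asymptotic stability from any fixed initial time, with constants depending on $k_0$. Your proposed upgrade to uniformity does not work: from the (correct) two-sided bound $C_-\mathbb{I}\le W_k\le C_+\mathbb{I}$ you conclude that the decay of $\|\Psi(k,k_0)\|$ ``is governed by $\|A^{-(k-k_0)}\|^2$'', but two-sided operator bounds on $P_k$ and on $P_{k_0}^{-1}$ separately do not control the non-symmetric product
\begin{equation*}
\Psi(k,k_0)=P_kP_{k_0}^{-1}=A^{-k_0}\Bigl(A^{-(k-k_0)}W_k^{-1}\bigl(A^{-(k-k_0)}\bigr)^{*}W_{k_0}\Bigr)A^{k_0},
\end{equation*}
because the outer conjugation by $A^{k_0}$ is precisely what is uncontrolled. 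Your parenthetical claim that the normal case is transparent is also false: $P_k$ is not diagonal in an eigenbasis of $A$, since the summands $(A^j)^*H^*R_j^{-1}HA^j$ need not commute with $A$. In fact no argument can close this gap, because the uniform statement itself fails. Take $A=\diag(2,3)$, $H=\begin{pmatrix}1&1\end{pmatrix}$, $R_k=1$, $P_0=\mathbb{I}$, which is observable with $\lambda_{\min}(A)=2>1$. By \eqref{eq-P_k}, $P_k^{-1}=\mathbb{I}+\sum_{j=0}^{k-1}\bigl(\begin{smallmatrix}4^j & 6^j\\ 6^j & 9^j\end{smallmatrix}\bigr)$, and a direct computation shows that the $(1,2)$ entry of $\Psi(k_0+m,k_0)$ grows like $15\,(4^{-m}-6^{-m})\,(3/2)^{k_0}$ as $k_0\to\infty$ (for instance the $(1,2)$ entry of $\Psi(6,5)$ is already $\approx 9.4$). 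Hence $\sup_{k_0}\|\Psi(k_0+m,k_0)\|=\infty$ for each fixed $m\ge 1$, and no bound $\alpha e^{-\lambda(k-k_0)}$ as in Proposition \ref{el-lemma!!} can hold. You should also know that the paper's own proof hides this same gap: it bounds $\|P_{k_0}^{-1}\|\le Ce^{\beta' k_0}$ with $\beta'$ determined by $s_{\max}(A)$, bounds $\|P_k\|\le\rho^{-1}e^{-\beta k}$ with $\beta$ determined by $\lambda_{\min}(A)$, and then asserts that the product is $\le\alpha e^{-\beta(k-k_0)}$; this requires $\beta'\le\beta$, i.e.\ essentially $s_{\max}(A)\le\lambda_{\min}(A)$, which fails in the example above. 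So your inability to finish the uniformity step is not a defect of your strategy: without additional hypotheses on $A$, only the non-uniform conclusion (which both you and the paper actually establish) is true.
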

\begin{proof}
In order to prove the result let us observe the following:

By Proposition \ref{Psi props} and equation \eqref{eq-P_k} we have
$$\Psi(k,k_0)=P_kP_{k_0}^{-1} \qquad \forall k\geq k_0
$$
with 
$$
P_n^{-1}=P_0^{-1}+\mathcal{O}(n,0) \quad\text{for every } n\in \mathbb{N}.
$$
Thus, by Proposition \ref{el-lemma!!} it is enough to show that there exist $\alpha, \beta>0$ such that
$$
\|\Psi(k, k_0)\|\leq\|P_k\| \|P_{k_0}^{-1}\|\leq \alpha e^{-\beta(k-k_0)}\quad \forall k\geq k_0\geq 0.
$$
Since the matrices $P_k$ are symmetric positive definite, by the Weyl's Theorem mentioned before we have the handy inequality for $\|P_k\|=\lambda_{\max}(P_k)$:
\begin{equation}\label{inequality norm P_k}
 \frac{1}{\lambda_{\max}\left(P_0^{-1}\right)+\lambda_{\min}\left(\mathcal{O}(k,0)\right)}\leq\lambda_{\max}(P_k)\leq\frac{1}{\lambda_{\min}\left(P_0^{-1}\right)+\lambda_{\min}\left(\mathcal{O}(k,0)\right)}   
\end{equation}
%\begin{equation*}
%    \lambda_{\max}(P_k) \geq \frac{1}{\lambda_{\min}\left(P_0^{-1}\right)+\lambda_{\max}\left(\mathcal{O}(k,0)\right)}
%\end{equation*}

First, for a fixed $k_0\in \mathbb{N}$, 
$$
\|P_{k_0}^{-1}\|\leq \|P_0^{-1}\| + \|\mathcal{O}(k_0,0)\| \leq \|P_0^{-1}\| +\frac{\|H^*H\|}{\sigma^2}\sum_{j=0}^{k_0-1}s_{\max}(A)^{2j} \leq Ce^{\beta' k_0}
$$
with $C,\beta'$ positive constants depending on $\|H^*H\|, \|P_0^{-1}\|,\sigma$ and $s_{\max}(A)$. 

Now, by the right hand inequality in \eqref{inequality norm P_k} and the exponential bound \eqref{exponential bound for O}  obtained in Lemma \ref{lambda_min O} we get
$$
\|P_k\| \leq \frac{1}{\rho}e^{-\beta k}.
$$
Therefore 
\begin{equation*}
    \|\Psi(k, k_0)\|\leq\alpha e^{-\beta(k-k_0)}\quad \forall k\geq k_0\geq 0,
\end{equation*}
with $\alpha, \beta$ positive constants depending on $L, \rho, \sigma^2, \lambda_{\min}(A),s_{\max}(A), \|H^*H\|$ and $\|P_0^{-1}\|$. 

\medskip

For the second assertion of the theorem observe that, due to the left hand inequality \eqref{inequality norm P_k} and Lemma \ref{lambda_min O}
$$
\lim_{k\to\infty} \|P_k\| \neq 0.
$$
But, as it was stated before in \eqref{P_k}, the matrices $P_k$ follow the dynamics $P_k = \Psi_kP_{k-1}$ with initial condition at time $k=0$ given by $P_0$. This in turn implies, by Proposition \ref{el-lemma!!}, that $\lim_{k\to\infty}\|\Psi(k,0)\|\neq 0$ which is equivalent to say that the equilibrium point is not asymptotically stable.
\end{proof}

\begin{remarkx}\
\begin{enumerate}
    \item If  $\lambda_{\max}(A)<1$, it can be easily seen that the equilibrium point of the system \eqref{eq_estabilidad_1}
is uniformly stable. 
    \item If the dynamics operator $A$ is normal, then $\lambda_{\min}(A)\geq 1$ is a necessary and sufficient condition for the uniform asymptotic stability of the estimation error dynamics.
\end{enumerate}
\end{remarkx}

The next result enlighten us on the importance of the previous theorem. First, we mention a very agreeable property of a given estimator. 

An estimator is said to be \textit{consistent} if, roughly speaking, has the property that as the number of observations (data) used increases indefinitely, the resulting sequence of estimates \textit{converges in probability} to the quantity to be estimated. A sufficient criterion for an estimator to be consistent is that its \textit{mean squared error} converges to $0$. The estimator is said to be \textit{asymptotically unbiased} if its mean converges to the true quantity to be estimated, i.e, $\lim_{k\to\infty}\|\mathbb{E}(e_k)\| = 0$. For a full exposition on the subject see \cite{casella}.

Theorem \ref{asymp stability theorem} asserts that for an LTI observable dynamical system with dynamic operator $A$ having all its eigenvalues outside the unit disk, the zero solution of the time varying system \eqref{eq_estabilidad_1} 
% $$
% z(k) = \Psi_k z(k-1)\ \  z(0) =z_0
% $$
is asymptotically stable. As it was already mentioned at the beginning of Section \ref{asymptotics}, this dynamical system is in fact the dynamical system of the expected value of the error estimation $\mathbb{E}(e_k)$. Then, under the aforementioned hypotheses we have that
\begin{equation*}
    \lim_{k\to \infty}\|\mathbb{E}(e_k)\| = \lim_{k\to\infty}\|P_k\| = 0.
\end{equation*}
Recall that the matrices $P_k$ given in \eqref{covariance} satisfy
$$P_k = \mathbb{E}(e_ke_k^*) - \mathbb{E}(e_k)\mathbb{E}(e_k)^*,
$$
then by taking traces in this expression we get
\begin{equation}\label{variance}
    \Tr P_k = \mathbb{E}(\|e_k\|^2) - \|\mathbb{E}(e_k)\|^2
\end{equation}
where we have used the identities 
\begin{align*}
    \Tr(\mathbb{E}(e_ke_k^*)) &=\mathbb{E}(e_k^*e_k)= \mathbb{E}(\|e_k\|^2)\\
    \Tr(\mathbb{E}(e_k)\mathbb{E}(e_k)^*) &= \mathbb{E}(e_k)^*\mathbb{E}(e_k) = \|\mathbb{E}(e_k)\|^2.
\end{align*}
The first term in the right hand side of \eqref{variance} is called the \textit{mean squared error} for the estimator $\widehat{x}_k$. In view of the above discussion we have the following result.

\begin{corollary}
For the observable dynamical system \eqref{dynSys} with $\lambda_{\min}(A)>1$, Algorithm \ref{alg-Kalman} gives an asymptotically unbiased and consistent estimator. 
\end{corollary}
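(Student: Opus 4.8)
The plan is to split the claim into its two constituent properties --- asymptotic unbiasedness and consistency --- and to derive each of them from the uniform asymptotic stability already established in Theorem \ref{asymp stability theorem}, so that essentially no new estimate is required.

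For asymptotic unbiasedness, I would first recall from the opening of Section \ref{asymptotics} that the sequence $\mathbb{E}(e_k)$ obeys exactly the linear recursion $\mathbb{E}(e_k) = \Psi_k\,\mathbb{E}(e_{k-1})$; that is, it is a trajectory of the system \eqref{eq_estabilidad_1} with initial datum $\mathbb{E}(e_0)$. Since $\lambda_{\min}(A)>1$, Theorem \ref{asymp stability theorem} guarantees that the equilibrium point of that system is uniformly asymptotically stable, hence (by Proposition \ref{el-lemma!!}) globally so, and therefore $\lim_{k\to\infty}\|\mathbb{E}(e_k)\|=0$ irrespective of the initial guess. This is precisely the definition of an asymptotically unbiased estimator.

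For consistency, I would invoke the stated sufficient criterion, namely that it suffices to show the mean squared error $\mathbb{E}(\|e_k\|^2)$ tends to zero. Starting from the identity \eqref{variance}, I would rearrange it as $\mathbb{E}(\|e_k\|^2)=\Tr P_k+\|\mathbb{E}(e_k)\|^2$ and control each summand separately. The second term already vanishes in the limit by the unbiasedness argument above. For the first term, the proof of Theorem \ref{asymp stability theorem} directly furnishes $\|P_k\|\le \tfrac{1}{\rho}e^{-\beta k}\to 0$; since each $P_k$ is a positive semidefinite matrix of fixed size $d$, one has $\Tr P_k\le d\,\lambda_{\max}(P_k)=d\,\|P_k\|\to 0$. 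Combining the two limits yields $\mathbb{E}(\|e_k\|^2)\to 0$, which gives consistency and completes the corollary.

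The main obstacle here is not analytic but a matter of careful bookkeeping. One must confirm that the decay $\|P_k\|\to 0$ is genuinely supplied by Theorem \ref{asymp stability theorem} --- it is, because $P_k=\Psi(k,0)P_0$ and $\|\Psi(k,0)\|\le \alpha e^{-\beta k}$ --- and that passing from operator-norm decay of $P_k$ to decay of its trace is legitimate, which holds precisely because in finite dimension $d$ the trace of a positive semidefinite matrix is bounded by $d$ times its spectral norm. No hypotheses beyond $\lambda_{\min}(A)>1$ and $P_0>0$ are needed.
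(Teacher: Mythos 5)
Your proof is correct and follows essentially the same route as the paper: both derive asymptotic unbiasedness from the fact that $\mathbb{E}(e_k)$ is a trajectory of the system \eqref{eq_estabilidad_1}, whose equilibrium is uniformly asymptotically stable under $\lambda_{\min}(A)>1$ by Theorem \ref{asymp stability theorem}, and both obtain consistency from the identity \eqref{variance} together with the decay $\|P_k\|\to 0$. Your extra bookkeeping (the bound $\Tr P_k \leq d\,\|P_k\|$ and the appeal to Proposition \ref{el-lemma!!} for the global character of the stability) merely makes explicit steps the paper leaves implicit.
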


\subsection{Numerical examples}
In this section we give two numerical examples to depict the asymptotic stability  and stability for the estimation error dynamics for the method proposed in Section \ref{sec_our_method}. In Example 1 we consider a dynamics satisfying the hypotheses of Theorem \ref{asymp stability theorem}. In Example 2 we have a dynamical system for which the error dynamics will be only Lyapunov stable.

\subsubsection{Example 1}
Consider the LTI system
$$\begin{cases}
    x(k+1) = Ax(k) \\
    y(k) = Hx(k) + v_k
\end{cases}$$
with 
$$
A = \begin{bmatrix} 1.99 &-0.32 & 0.  &  0.07\\
          0.43 & 1.17 & 0.02 & 0. \\
          0.13 & -0.09 & 1.52 & -0.13\\
          0.28 &-0.14 & 0.03 & 1.22\end{bmatrix}, \qquad   H = \begin{bmatrix}
         1 & 0& 0& 0\\
         0 & 0& 1& 0
    \end{bmatrix}
$$
and $v_k \sim N(0, \sigma\mathbb I)$ for  $\sigma =0.01$. The true initial state is $x_0 = [0.2, 0.4, 0.5, 0.3]$. 

Consider initial conditions 
$\widehat{x}_0 = [0.376, 0.502, 0.421, 0.366]$, initial error covariance $P_0 = 10^{-2}\mathbb{I}$. 

\begin{figure}[H]
\centering
\begin{subfigure}[b]{0.48\textwidth}
\centering
\includegraphics[width=1\linewidth, height=4cm]{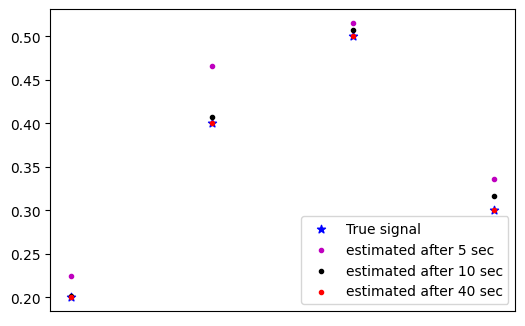} 
\caption{True and estimated coordinates of $x_0$.}
\label{fig:subim1}
\end{subfigure}
\begin{subfigure}[b]{0.5\textwidth}
\centering
\includegraphics[width=1\linewidth, height=4cm]{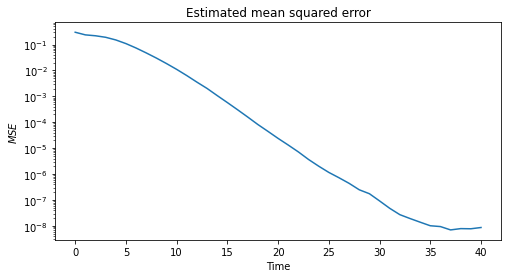}
\caption{Mean squared error.}
\label{fig:subim2}
\end{subfigure}
\begin{subfigure}[b]{1\textwidth}
\centering
\includegraphics[width =1\linewidth,height=5cm]{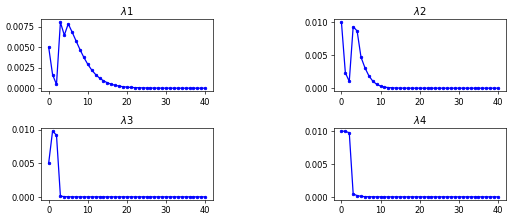}
\caption{Trajectory of the four eigenvalues of the matrices $P_k$.}
\label{fig:subim3}
\end{subfigure}

\caption{Error analysis for Example 1.}
\label{fig:image2}
\end{figure}
Figure (\ref{fig:subim1}) illustrates the plots of original signal (true signal) and its estimation after $5, 10, 40$ time steps. Figure (\ref{fig:subim2}) shows the evolution of the mean squared error $\mathbb{E}(e_k)$ for 40 time steps. Figure (\ref{fig:subim3}) shows the evolution for the four eigenvalues of the covariance matrix $P_k$ at every instant $k$ during 40 time steps.

\subsubsection{Example 2}
Consider the LTI system
$$\begin{cases}
    x(k+1) = Ax(k) \\
    y(k) = Hx(k) + v_k
\end{cases}$$
with 
$$
A = \begin{bmatrix} 1 &-0.5 \\
          -0.5 & 1 \end{bmatrix}, \qquad   H = \begin{bmatrix}
         0 & 1
    \end{bmatrix}
$$
and $v_k \sim N(0, \sigma\mathbb I)$ for  $\sigma =0.001$. The true initial state is $x_0 = [0.83053274, 0.35472554]$. 
The initialization for the algorithm is:
$\widehat{x}_0 = [0.99065169, 0.19889222]$, initial error covariance $P_0 = 10^{-2}\mathbb{I}$. 
\begin{figure}[H]
\centering
\begin{subfigure}[b]{0.48\textwidth}
\centering
\includegraphics[width=1\linewidth, height=4.2cm]{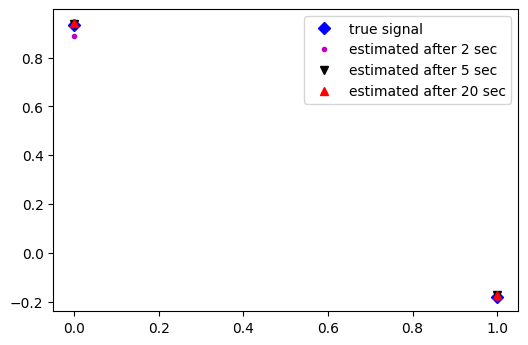} 
\caption{True and estimated coordinates of $x_0$.}
\label{fig:subim12}
\end{subfigure}
\begin{subfigure}[b]{0.5\textwidth}
\centering
\includegraphics[width=1\linewidth, height=4.2cm]{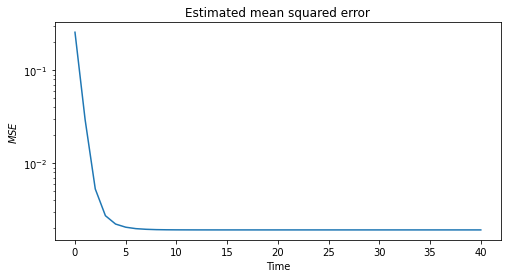}
\caption{Mean squared error.}
\label{fig:subim22}
\end{subfigure}
%\begin{subfigure}[b]{1\textwidth}
%\centering
%\includegraphics[width =0.9\linewidth,height=5cm]{Example2_eigenvals.png}
%\caption{Trajectory of the two eigenvalues of the matrices $P_k$}
%\label{fig:subim32}
%\end{subfigure}
\caption{Error analysis for Example 2.}
\label{fig:image22}
\end{figure}
Figure (\ref{fig:subim12}) illustrates the plots of original signal (true signal) and its estimation after $2, 5, 20$ time steps. Figure (\ref{fig:subim22}) shows the evolution of the mean squared error $\mathbb{E}(e_k)$ for 40 time steps. 


\begin{thebibliography}{LM}

\bibitem{acmt} {\sc A. Aldroubi, C. Cabrelli, U. Molter, S. Tang}. {\it Dynamical Sampling}. Appl. Comput. Harmon. Anal. \textbf{42}:3, 378--401 (2017).

\bibitem{rdm-adk} {\sc A. Aldroubi, J. Davis, I. Krishtal} {\it Dynamical Sampling: Time Space Trade-off}. Appl. Comput. Harmon. Anal. \textbf{34}:3, 495--503 (2013).

\bibitem{random-noise} {\sc A. Aldroubi, L. Huang, I. Krishtal, R. Lederman}.  {\it Dynamical sampling with random noise}. International Conference SampTA, IEEE, Tallin.  409--412 (2017).

\bibitem{bof} {\sc N. Bof, R. Carli, L. Schenato}. {\it Lyapunov theory for discrete time systems}. arXiv preprint arXiv:1809.05289 (2018).

\bibitem{Cadzow} {\sc J. A. Cadzow} {\it Signal enhancement-a composite property mapping algorithm}. IEEE Transactions on Acoustics, Speech, and Signal Processing. \textbf{36}:1, 49--62 (1988).

\bibitem{casella} {\sc G. Casella, R. Berger} {\it Statistical Inference}. Duxbury Press. (2001).

\bibitem{dmmm}
	{\sc R. D\'iaz Mart\'in, I. Medri, U. Molter}.
	{\it Dynamical Sampling: a view from control theory}. Special Volume, Applied and Numerical Harmonic Analysis, book series, Birkhäuser Boston. 	(In press) arXiv:2003.01488  (2020). 

%\bibitem{rdm-DDM20} {\sc R. D\'\i az Mart\'\i n, I. Medri, U. Molter}. {\it Continuous and discrete Dynamical Sampling}. Preprint, arXiv:2006.08046 (2020).


\bibitem{haddad} {\sc W. Haddad, V. Chellaboina}  {\it Nonlinear dynamical systems and control. A Lyapunov-based approach}. Princeton University Press. (2008).

\bibitem{Horn} {\sc R.A. Horn, C.R. Johnson} {\it Matrix Analysis}. Cambridge University Press, New York. (2013).

\bibitem{Jaz} {\sc A. H. Jazwinski}  {\it Stochastic processes and filtering theory}. Academic Press, New York. (1970).


% \bibitem{suarez} 
%     {\sc C. Cabrelli, U. Molter, D. Su\'arez}. 
%     {\it Multi-orbital frames through model spaces}. 
%   Complex Analysis and Operator Theory. (In press) arXiv:1908.11011 (2020).

\bibitem{Simon} {\sc D.  Simon}. {\it Optimal state estimation: Kalman, H infinity, and nonlinear approaches}. John Wiley and Sons (2006).

\bibitem{Zhang} {\sc Q. Zhang}. {\it On stability of the Kalman filter for discrete time output error systems}. Systems \& Control Letters, Elsevier. \textbf{107}, 84--91 (2017).

\bibitem{yamamoto} {\sc T. Yamamoto}. {\it On the extreme values of the roots of matrices.} J. Math. Soc. Japan \textbf{19}:2, 173--178 (1967). 

\end{thebibliography}
\end{document}